\documentclass[11pt]{amsart}
\usepackage{amsfonts}
\usepackage{graphicx}
\usepackage{amssymb}
\usepackage{amsmath}
\usepackage{amscd}
\usepackage{mathrsfs}
\usepackage{stmaryrd}
\usepackage{bbm}
\usepackage{longtable}
\usepackage{txfonts}
\usepackage{wrapfig}
\usepackage{picins}
\usepackage{float}
\usepackage{xypic}
\usepackage{dsfont}
\usepackage{xcolor}
\usepackage{color}
\usepackage[colorlinks,linkcolor=blue,anchorcolor=blue,
citecolor=blue]{hyperref}
\usepackage{hyperref}
\DeclareGraphicsRule{.jpg}{eps}{.bb}{}

\allowdisplaybreaks

\newtheorem{theorem}{Theorem}[subsection]
\newtheorem{prop}[theorem]{Proposition}

\newtheorem{lem}[theorem]{Lemma}

\newtheorem{rem}[theorem]{Remark}
\newtheorem{conj}{Conjecture}
\newtheorem{exam}[theorem]{Example}

\textheight 23cm
\textwidth 16.2cm
\topmargin -1.0cm


\begin{document}
\setlength{\oddsidemargin}{0cm}
\setlength{\evensidemargin}{0cm}

\title{On Modular Invariants  of A Vector and A Covector}

\author[ychen]{Yin Chen}

\address[1]{
Chern Institute of Mathematics, Nankai University, Tianjin 300071, P.R. China
}
\address[2]{
School of Mathematics and Statistics, Northeast Normal University,
Changchun 130024, P.R. China
}

\email{ychen@nenu.edu.cn}

\date{\today}

\def\shorttitle{Invariants of a vector and a covector}

\begin{abstract}
Let $SL_{2}(F_{q})$ be the special linear group over a finite field $F_{q}$, $V$ be the 2-dimensional natural representation of $SL_{2}(F_{q})$ and $V^{\ast}$ be the dual representation.
We denote by $F_{q}[V\oplus V^{\ast}]^{SL_{2}(F_{q})}$  the corresponding invariant ring of a vector and a covector for  $SL_{2}(F_{q})$. In this paper, we construct a free module basis over some homogeneous system of parameters of $F_{q}[V\oplus V^{\ast}]^{SL_{2}(F_{q})}$. We calculate the Hilbert series of $F_{q}[V\oplus V^{\ast}]^{SL_{2}(F_{q})}$, and prove that it is a Gorenstein algebra. As an application, we confirm  a special case of the recent conjecture of Bonnaf\'{e} and  Kemper  in \cite{BK2011}.
\end{abstract}

\subjclass[2010]{13A50, 20H30.}

\keywords{Modular invariant; Gorenstein algebra; Hilbert series; Homogeneous system of parameters}

\maketitle
\baselineskip=15pt

\section{Introduction}
\setcounter{equation}{0}
\renewcommand{\theequation}
{1.\arabic{equation}}

\setcounter{theorem}{0}
\renewcommand{\thetheorem}
{1.\arabic{theorem}}

Let $K$ be a field and $V$ be an $n$-dimensional faithful representation of a finite group $G$ over $K$.
Let $V^{\ast}$ be the dual representation of $V$ and  $K[V\oplus V^{\ast}]$ be the symmetric algebra of $V^{\ast}\oplus V$.
If we choose $\{x_{1},\cdots,x_{n}\}$ as a standard basis of $V$ and $\{y_{1},\cdots,y_{n}\}$ as the dual basis, then
$K[V\oplus V^{\ast}]$  can be identified with the polynomial ring $K[x_{1},\cdots,x_{n},y_{1},\cdots,y_{n}]$.
For every element  $g\in G$, the action of $g$ on $V$ induces an  action  on $K[V\oplus V^{\ast}]$ as a $K$-algebra automorphism.
In the language of classical invariant theory,
the subring consisting  of all $G$-invariant polynomials is called
the \textit{invariant ring of a vector and a covector} of $G$  and denoted by $K[V\oplus V^{\ast}]^{G}$. If the characteristic of $K$ divides the order of $G$ then the invariant ring $K[V\oplus V^{\ast}]^{G}$ is called \textit{modular}; otherwise \textit{non-modular} or \textit{ordinary}.
One of the purposes in invariant theory is to find  explicit generators and relations among them for the invariant ring  $K[V\oplus V^{\ast}]^{G}$. We recommend \cite{Ben1993,CW2010,DK2002,NS2002}
as general references for invariant theory of finite groups.

In this paper, we shall study the modular invariant ring $K[V\oplus V^{\ast}]^{G}$ for some finite linear group $G$ with its natural modular $2$-dimensional representation $V$.  There are two motivations which lead us to consider the modular case.
Firstly, the non-modular invariant ring $K[V\oplus V^{\ast}]^{G}$ is of importance in representation theory of Cherednik algebras and the geometry of Hilbert schemes. Especially in the situation where $k$ is the field of complex numbers and $G$ is a finite reflection group, it has been studied intensively and affected in many research papers over the last twenty years (see \cite{Gor2003,Hai1994,Hai2002,BK2011} and references therein). However, very few results seem to be known in the modular case. The second reason comes from the modular vector invariant theory. A well-known fact asserts that $F_{q}[V\oplus V]^{U_{n}(F_{q})}$, the ring of vector invariants of 2 copies of $V$,
 is not Cohen-Macaulay where $n\geq 3$ and $V$ is the natural module of $U_{n}(F_{q})$, the group of all upper triangular  matrices  over a finite field $F_{q}$. On the other hand, a recent result due to Bonnaf\'{e} and Kemper \cite{BK2011} asserts that the invariant ring  $F_{q}[V\oplus V^{\ast}]^{P_{n}(F_{q})}$
 is complete intersection and so Cohen-Macaulay. This  interesting example indicates that  there may be a big difference between
$F_{q}[V\oplus V^{\ast}]^{G}$ and $F_{q}[V\oplus V]^{G}$ for the same group $G$ and the same representation $V$.

In what follows we assume that $K=F_{q}$ is a finite field with $q=p^{r}$ elements, where $p$ is a prime and $r\geq 1$ is a positive number.
We denote by $U_{n}(F_{q})$ the group of all upper triangular  matrices in the special linear group $SL_{n}(F_{q})$ and by
$P_{n}(F_{q})$  the group of all upper triangular matrices with 1's on the main diagonal. Recall that $P_{n}(F_{q})$ is the Sylow $p$-subgroup of $U_{n}(F_{q})$, $SL_{n}(F_{q})$ and the general linear group $GL_{n}(F_{q})$.
In the recent paper \cite{BK2011} by Bonnaf\'{e} and Kemper,  the invariant rings  $F_{q}[V\oplus V^{\ast}]^{U_{n}(F_{q})}$ and $F_{q}[V\oplus V^{\ast}]^{P_{n}(F_{q})}$
were calculated, and  the conclusions assert that two invariant rings  are both complete intersection, where $V$ is the natural representation. In last section of the same paper, the authors have raised the following conjecture.
\begin{conj}[Bonnaf\'{e}-Kemper \cite{BK2011}]
The invariant ring $F_{q}[V\oplus V^{\ast}]^{GL_{n}(F_{q})}$
is a Gorenstein algebra.
\end{conj}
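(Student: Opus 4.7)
The plan is to reduce the Bonnaf\'e-Kemper conjecture to a Cohen-Macaulayness statement and then to attack the latter by reducing further to $SL_{n}(F_{q})$. The key preliminary remark is that $GL_{n}(F_{q})$ acts on $W:=V\oplus V^{\ast}$ by $g\cdot(v,\varphi)=(gv,(g^{-1})^{T}\varphi)$, whose total determinant is $\det(g)\cdot\det(g)^{-1}=1$. Hence the image of $GL_{n}(F_{q})$ lies inside $SL(W)$. The classical Gorenstein criterion for such subgroups (the canonical module $\omega_{R}$ of $R:=F_{q}[W]^{GL_{n}(F_{q})}$ is isomorphic to $R$ itself whenever the acting group lies in $SL(W)$, because it then acts trivially on the top exterior power of $W^{\ast}$) yields the implication: \emph{if $R$ is Cohen-Macaulay, then $R$ is Gorenstein}. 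This implication holds in arbitrary characteristic, so the conjecture is equivalent to showing that $F_{q}[V\oplus V^{\ast}]^{GL_{n}(F_{q})}$ is Cohen-Macaulay.

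Since $[GL_{n}(F_{q}):SL_{n}(F_{q})]=q-1$ is coprime to $p$, the Reynolds operator for the $p'$-quotient $F_{q}^{\ast}\cong GL_{n}(F_{q})/SL_{n}(F_{q})$ makes $F_{q}[W]^{GL_{n}(F_{q})}$ a direct summand of $F_{q}[W]^{SL_{n}(F_{q})}$; a direct summand of a Cohen-Macaulay module over itself is Cohen-Macaulay, so it suffices to prove the same property for $F_{q}[W]^{SL_{n}(F_{q})}$. For the latter, the natural candidate homogeneous system of parameters is the disjoint union of the Dickson invariants $c_{n,i}(x)$ and $c_{n,i}(y)$ for $0\leq i\leq n-1$: there are $2n$ of them, they are algebraically independent because they live in disjoint sets of $n$ variables each, and each family Noether-normalises the respective half of the ambient polynomial ring. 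The remaining task is to exhibit a finite list of invariant monomials whose classes modulo this HSOP form a free basis of $F_{q}[W]^{SL_{n}(F_{q})}$ over $F_{q}[c_{n,i}(x),\,c_{n,j}(y)]$, producing simultaneously an HSOP and a Cohen-Macaulay witness. This is precisely the program executed in the body of the present paper for $n=2$; the $GL_{2}(F_{q})$ case then follows immediately by averaging over $F_{q}^{\ast}$.

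The main obstacle is the second step for $n\geq 3$. A complete presentation of $F_{q}[V\oplus V^{\ast}]^{SL_{n}(F_{q})}$ is not available in the literature, and the cautionary fact that $F_{q}[V\oplus V]^{U_{n}(F_{q})}$ fails to be Cohen-Macaulay for $n\geq 3$, highlighted in the introduction, warns against any approach that does not essentially use the non-degenerate pairing between $V$ and $V^{\ast}$. A plausible route would be to enlarge the HSOP by polarised pairings of the form $\sum_{i}x_{i}^{q^{k}}y_{i}$, to stratify $W$ by the rank of the matrix $(x_{i}^{q^{k}}y_{j})_{k,j}$, and to invoke the complete intersection results of \cite{BK2011} for $P_{n}(F_{q})$ and $U_{n}(F_{q})$ as base cases in a depth-chasing argument along the chain $P_{n}(F_{q})\subseteq U_{n}(F_{q})\subseteq \mathrm{Borel}\subseteq GL_{n}(F_{q})$, in the spirit of the transfer and depth techniques of Smith-Stong or Fleischmann-Kemper-Shank. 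Producing an explicit Cohen-Macaulay witness of this kind for $SL_{n}(F_{q})$ when $n\geq 3$ is where I expect the genuine difficulty to lie, and is the reason the Bonnaf\'e-Kemper conjecture remains open beyond the $n=2$ case treated here.
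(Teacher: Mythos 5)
You should first note that the statement you are proving is posed in the paper only as a conjecture: the paper itself proves it solely for $n=2$, and by a different route from yours, namely by exhibiting an explicit free module basis of $R_{2}^{GL_{2}}$ over the polynomial subalgebra $R_{2}^{GL_{2}^{2}}$ (Theorems 3.3 and 4.2), computing the Hilbert series from the degrees of the basis elements, and reading off Gorensteinness from Stanley's functional equation $H(R,1/t)=t^{i}H(R,t)$ (Theorems 3.4 and 4.3). Nowhere does the paper argue via the canonical module or the inclusion of the group in $SL(V\oplus V^{\ast})$. Your proposal is a reduction scheme rather than a proof, and you say so honestly; the problem is that the two reductions are weighted exactly backwards. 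The step you treat as free --- ``the image lies in $SL(W)$, so $\omega_{R}\cong R$ and Cohen--Macaulay implies Gorenstein, in arbitrary characteristic'' --- is the genuinely hard part. The identification $\omega_{S^{G}}\cong(\omega_{S})^{G}$ behind Watanabe's criterion is classical only when $p\nmid|G|$; in the modular case it requires at the very least that the image of $G$ in $GL(W)$ contain no pseudo-reflections (a condition you never verify, although it does hold here since $\mathrm{rank}(g-1|_{V\oplus V^{\ast}})=2\,\mathrm{rank}(g-1|_{V})\geq 2$ for $g\neq 1$), and even under that hypothesis the implication ``Cohen--Macaulay $\Rightarrow$ Gorenstein for subgroups of $SL(W)$'' in positive characteristic is a substantial theorem of Braun published only after Bonnaf\'{e} and Kemper posed the conjecture --- not a classical fact, and not something you may assert without proof or citation. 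As literally stated, without the pseudo-reflection hypothesis, your criterion is not a theorem in the modular setting.

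Conversely, the step you single out as ``where the genuine difficulty lies'' --- producing a Cohen--Macaulay witness for $F_{q}[V\oplus V^{\ast}]^{SL_{n}(F_{q})}$ when $n\geq 3$ --- is not difficult at all and requires none of the machinery you sketch. Since $P_{n}(F_{q})$ is a Sylow $p$-subgroup of $GL_{n}(F_{q})$ and Bonnaf\'{e}--Kemper prove that $F_{q}[V\oplus V^{\ast}]^{P_{n}(F_{q})}$ is a complete intersection (hence Cohen--Macaulay) for every $n$, the Campbell--Hughes--Pollack theorem (Lemma \ref{prop:2.2} of this paper, used in exactly this way in Proposition \ref{prop:2.3}) immediately gives Cohen--Macaulayness of $F_{q}[V\oplus V^{\ast}]^{G}$ for every group $G$ between $P_{n}(F_{q})$ and $GL_{n}(F_{q})$ and every $n$; no homogeneous system of parameters, rank stratification, or depth-chasing along a chain of parabolics is needed, and your direct-summand reduction from $GL_{n}$ to $SL_{n}$ is likewise superfluous. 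The upshot is that if your first reduction were actually justified, your argument would already prove the full conjecture in three lines --- which should have been a warning sign that the first reduction cannot be as cheap as you claim. The open content of the conjecture, at the level of generality the paper works in, is precisely the Gorenstein symmetry, which the paper establishes for $n=2$ by brute-force verification of the Hilbert series and which your proposal assumes away.
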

 \noindent Bonnaf\'{e} and Kemper verified this conjecture for some special $n$ and $q$ by various computations in the computer algebra system \verb"magma". However, they did not give any mathematical proof for this conjecture (even in the case $n=2$).

As we know, a basic and very complicated task in modular invariant theory is to determine the structure of the invariant ring $F_{q}[V\oplus V^{\ast}]^{G}$ for a finite group $G$ and a modular representation $V$. We consider $G=SL_{2}(F_{p})$ and $F_{q}=F_{p}$ is a prime field.
Since the 2-dimensional natural representation $V_{2}$ is self-dual, the invariant ring
$F_{p}[V_{2}\oplus V_{2}^{\ast}]^{SL_{2}(F_{p})}$ is isomorphic to the vector invariant ring $F_{p}[2V_{2}]^{SL_{2}(F_{p})}$.  Recently, the latter was computed by Campell \textit{et al}. in \cite{CSW2010}. (Actually, a set of minimal generators for $F_{p}[mV_{2}]^{SL_{2}(F_{p})}$ was constructed.)
Apart from this, it seems there are no
more advances in investigating the structure of the invariant ring $F_{q}[V\oplus V^{\ast}]^{G}$ for a finite linear group $G$ with its natural module $V$,  even for the most important groups $SL_{n}(F_{q})$ and $GL_{n}(F_{q})$.

The main purpose of this paper is to construct a free module basis over some homogeneous system of parameters for $F_{q}[V\oplus V^{\ast}]^{SL_{2}(F_{q})}$ and $F_{q}[V\oplus V^{\ast}]^{GL_{2}(F_{q})}$ respectively. Some calculations for the Hilbert series of the two rings of invariants allow us to show that they are Gorenstein algebras. As an application, we confirm  a special case ($n=2$) of the above conjecture.

For simplicity, in the rest of this paper we write $P_{2},SL_{2}$, and $GL_{2}$ for $P_{2}(F_{q})$, $SL_{2}(F_{q})$, and $GL_{2}(F_{q})$  respectively. The symbols $P_{2}^{2}$, $SL_{2}^{2}$ and $GL_{2}^{2}$ will be employed to denote the products $P_{2}\times P_{2}$, $SL_{2}\times SL_{2}$ and $GL_{2}\times GL_{2}$ respectively. We write $R_{2}$ for the polynomial ring $F_{q}[x_{1},x_{2},y_{1},y_{2}]$.
The result of Bonnaf\'{e} and Kemper \cite{BK2011} asserts that the invariant ring $R_{2}^{P_{2}}$
is a hypersurface and so  is Cohen-Macaulay.  By a theorem of Campbell, Hughes and Pollack (Theorem 1 in \cite{CHP1991}),
the invariant rings  $R_{2}^{SL_{2}}$
and $R_{2}^{GL_{2}}$ are both Cohen-Macaulay.
This fact leads us to construct a free module basis for $R_{2}^{SL_{2}}$
and $R_{2}^{GL_{2}}$ over a suitably chosen homogeneous system of parameters.
The essence of this idea is the same as
that of Campbell and Hughes in \cite{CH1996}. The second step is to compute the corresponding  Hilbert series $H\left(R_{2}^{SL_{2}},t\right)$
and $H\left(R_{2}^{GL_{2}}, t\right)$. Finally, we apply a well-known theorem due to Stanley \cite{Sta1978} to prove the Gorensteinness of $R_{2}^{SL_{2}}$
and $R_{2}^{GL_{2}}$.

This paper is organized as follows. Section 2 is a preliminary  which contains some basic facts and constructions; more precisely, we construct a free module basis of $R_{2}^{P_{2}}$ over $R_{2}^{SL_{2}^{2}}$.
In Section 3, we extend some results of Campbell and Hughes \cite{CH1996} from  a prime field $F_{p}$ to  a  general finite field $F_{q}$ with $q=p^{r}$;
we construct a free module basis of $R_{2}^{SL_{2}}$ over $R_{2}^{SL_{2}^{2}}$ and compute the Hilbert series of  $R_{2}^{SL_{2}}$; we deduce that $R_{2}^{SL_{2}}$ is a Gorenstein algebra.
In Section 4, we apply the same techniques  to the case of $GL_{2}$ and  prove that $R_{2}^{GL_{2}}$  is also  a Gorenstein algebra.

\section{Preliminaries}
\setcounter{equation}{0}
\renewcommand{\theequation}
{2.\arabic{equation}}

\setcounter{theorem}{0}
\renewcommand{\thetheorem}
{2.\arabic{theorem}}

We use the same abbreviations as  in the Introduction,  such as $P_{2}$, $SL_{2}$, $GL_{2}$, and $R_{2}$.
For every $2\times 2$ nonsingular matrix $A$, the map $A\mapsto\left[\begin{smallmatrix}
(A^{t})^{-1}&0\\
0&A
\end{smallmatrix}\right]$ gives an embedding of $GL_{2}$ into $GL_{4}$, where $(A^{t})^{-1}$ is the inverse of the transpose of $A$.
Thus the groups $P_{2}$, $SL_{2}$, and $GL_{2}$ can be viewed as  subgroups of $P_{2}^{2}$,
$SL_{2}^{2}$, and $GL_{2}^{2}$ respectively.
We write $A=\left[\begin{smallmatrix}
a&b\\
c&d
\end{smallmatrix}\right]$, the induced  action of $A$ on $R_{2}$ is given by
\begin{eqnarray}
A\cdot \left[\begin{smallmatrix}
x_{1}\\
x_{2}
\end{smallmatrix}\right]
&=&(\textrm{det}A)^{-1}\left[\begin{smallmatrix}
d&-c\\
-b&a
\end{smallmatrix}\right]\left[\begin{smallmatrix}
x_{1}\\
x_{2}
\end{smallmatrix}\right]=(\textrm{det}A)^{-1}\left[\begin{smallmatrix}
dx_{1}-cx_{2}\\
-bx_{1}+ax_{2}
\end{smallmatrix}\right]\label{eq:2.1}\\
A\cdot \left[\begin{smallmatrix}
y_{1}\\
y_{2}
\end{smallmatrix}\right]&=&\left[\begin{smallmatrix}
a&b\\
c&d
\end{smallmatrix}\right]\left[\begin{smallmatrix}
y_{1}\\
y_{2}
\end{smallmatrix}\right]=\left[\begin{smallmatrix}
ay_{1}+by_{2}\\
cy_{1}+dy_{2}
\end{smallmatrix}\right].\label{eq:2.2}
 \end{eqnarray}

We define an involution $\ast: R_{2}\rightarrow R_{2}$ given by $x_{1}\mapsto y_{2}, x_{2}\mapsto y_{1}, y_{1}\mapsto x_{2}, y_{2}\mapsto x_{1}$. Moreover, for  $G \in\left\{P_{2},SL_{2}, GL_{2}\right\}$,  the involution $\ast$ induces an automorphism of the invariant ring $R_{2}^{G}$.
Recall that the orders of groups $P_{2}$, $SL_{2}$, and $GL_{2}$ are  $q$,  $q(q^{2}-1)$,  and  $(q^{2}-1)(q^{2}-q)$ respectively.
A theorem of Kemper (\cite{Kem1996}, Proposition 16) implies that the invariant ring
$R_{2}^{SL_{2}^{2}}$ is a polynomial algebra and generated by the Dickson invariants and their $\ast$-images:
\begin{eqnarray*}
d_{2,2}=\textrm{det}\left[\begin{smallmatrix}
x_{2}&x_{2}^{q}\\
x_{1}&x_{1}^{q}
\end{smallmatrix}\right], & ~~c_{2,1}=(d_{2,2})^{-1}\cdot\textrm{det}\left[\begin{smallmatrix}
x_{2}&x_{2}^{q^{2}}\\
x_{1}&x_{1}^{q^{2}}
\end{smallmatrix}\right];\\
d_{2,2}^{\ast}=\textrm{det}\left[\begin{smallmatrix}
y_{1}&y_{1}^{q}\\
y_{2}&y_{2}^{q}
\end{smallmatrix}\right], & ~~c_{2,1}^{\ast}=(d_{2,2}^{\ast})^{-1}\cdot\textrm{det}\left[\begin{smallmatrix}
y_{1}&y_{1}^{q^{2}}\\
y_{2}&y_{2}^{q^{2}}
\end{smallmatrix}\right].
 \end{eqnarray*}

 The following two lemmas are useful in our discussion (see the proof of Proposition 3.1 in \cite{CH1996} and  Theorem 1 in \cite{CHP1991} for the detailed  proofs).

 \begin{lem} \label{lem:2.1}
Let $R$ be  the polynomial ring over a field $K$ and $G$ act on $R$ as a $K$-automorphism group.
If $H$ is  a subgroup of $G$  such that the subring of fixed elements $R^{H}$ is Cohen-Macaulay and $R^{G}$ a polynomial ring, then
$R^{H}$ is a free module over $R^{G}$ of rank $[G:H]$.
\end{lem}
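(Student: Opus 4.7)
The plan is to combine Hironaka's criterion (the graded Cohen--Macaulay characterization in terms of freeness over a homogeneous system of parameters) with a Galois-theoretic rank computation using fields of fractions. Specifically, because $R$ is a polynomial ring and $G$ is finite, the extension $R^{G}\subseteq R$ is module-finite and hence $R^{G}\subseteq R^{H}$ is module-finite too; in particular $R$, $R^{H}$ and $R^{G}$ all share the same Krull dimension $n=\dim R$.

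Since $R^{G}$ is assumed to be a polynomial ring, write $R^{G}=K[f_{1},\dots,f_{n}]$ with $f_{1},\dots,f_{n}$ homogeneous. These generators automatically form a homogeneous system of parameters for $R^{G}$, and therefore, by integrality, also a homogeneous system of parameters for $R^{H}$. At this point I would invoke the graded form of Hironaka's criterion: a finitely generated positively graded $K$-algebra is Cohen--Macaulay if and only if it is a free module over the polynomial subalgebra generated by any one of its homogeneous systems of parameters. Applying this with the hypothesis that $R^{H}$ is Cohen--Macaulay yields that $R^{H}$ is free over $R^{G}=K[f_{1},\dots,f_{n}]$.

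It remains to identify the rank. Since $R^{H}$ is a finitely generated torsion-free $R^{G}$-module, the rank equals the dimension of the localization $R^{H}\otimes_{R^{G}}\mathrm{Frac}(R^{G})$ over $\mathrm{Frac}(R^{G})$, which in turn equals the field-extension degree $[\mathrm{Frac}(R^{H}):\mathrm{Frac}(R^{G})]$. Using the standard fact (proved by the ``average the denominator'' trick) that $\mathrm{Frac}(R^{G})=\mathrm{Frac}(R)^{G}$ and $\mathrm{Frac}(R^{H})=\mathrm{Frac}(R)^{H}$, Artin's theorem gives a Galois extension $\mathrm{Frac}(R)/\mathrm{Frac}(R)^{G}$ with group $G$ (the action being faithful since $R$ is a polynomial ring on a faithful module), so
\[
[\mathrm{Frac}(R^{H}):\mathrm{Frac}(R^{G})]=[\mathrm{Frac}(R)^{H}:\mathrm{Frac}(R)^{G}]=\frac{|G|}{|H|}=[G:H].
\]

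The argument is essentially a chain of classical invocations, so there is no single hard step; the main care-point is to verify that the homogeneous system of parameters of $R^{G}$ is genuinely a homogeneous system of parameters of $R^{H}$ (this comes down to integrality plus dimension equality) so that Hironaka's criterion can be applied cleanly, and to handle the passage between invariants of $R$ and invariants of $\mathrm{Frac}(R)$ correctly when computing the rank.
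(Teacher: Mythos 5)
Your proof is correct and is essentially the standard argument that the paper itself defers to (Campbell--Hughes, Proposition 3.1): freeness via the graded Hironaka/Cohen--Macaulay criterion applied to the homogeneous system of parameters generating $R^{G}$, and the rank $[G:H]$ via Galois theory of the fraction fields together with $\mathrm{Frac}(R^{G})=\mathrm{Frac}(R)^{G}$ and $\mathrm{Frac}(R^{H})=\mathrm{Frac}(R)^{H}$. No gaps; the two care-points you flag (that the hsop of $R^{G}$ is an hsop of $R^{H}$, and the passage to fraction fields) are exactly the ones that need checking, and you handle both.
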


\begin{lem}\label{prop:2.2}
Let $P$ be a Sylow $p$-subgroup of a finite group $G$ acting linearly on $R$, a polynomial algebra  over a field.  If the  invariant ring $R^{P}$ is Cohen-Macaulay,
then also is $R^{G}$.
\end{lem}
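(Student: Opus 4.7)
The plan is to exhibit $R^G$ as a graded direct summand of $R^P$ inside the category of $R^G$-modules, using the Sylow hypothesis to build a relative Reynolds operator, and then to transfer the Cohen--Macaulay property along the splitting via a common homogeneous system of parameters.

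Since $P$ is a Sylow $p$-subgroup of $G$, the index $n:=[G:P]$ is coprime to the characteristic of $K$ and hence a unit in $K$. This allows me to define the relative transfer
$$\rho : R^P \longrightarrow R^G, \qquad \rho(f)=\frac{1}{n}\sum_{gP\in G/P} g\cdot f.$$
The sum is independent of the choice of coset representatives (replacing $g$ by $gp$ for $p\in P$ leaves $g\cdot f$ unchanged, because $f\in R^P$), the map is $R^G$-linear, and it restricts to the identity on $R^G$. Therefore $R^G$ is a graded $R^G$-direct summand of $R^P$.

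Next, pick any homogeneous system of parameters $\theta_1,\ldots,\theta_d$ of $R^G$ and set $A=K[\theta_1,\ldots,\theta_d]$. Because $R$ is a finitely generated module over $R^G$, the intermediate ring $R^P$ is also finite over $R^G$, so $\theta_1,\ldots,\theta_d$ form a hsop for $R^P$ as well. The standard graded criterion for Cohen--Macaulayness (a graded finitely generated algebra is CM iff it is free as a module over the polynomial subring generated by any hsop) then turns the hypothesis into the statement that $R^P$ is a free $A$-module.

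Finally, $R^G$ is a graded direct summand of the free $A$-module $R^P$, hence a finitely generated projective graded $A$-module; over a graded polynomial ring such a module is automatically free (graded Nakayama). This freeness is exactly the statement that $\theta_1,\ldots,\theta_d$ is a regular sequence in $R^G$, so $R^G$ is Cohen--Macaulay. The delicate step is really just the construction of $\rho$, where the Sylow hypothesis is used in an essential way; once the splitting is available the rest is routine graded commutative algebra, so I expect the invertibility of $[G:P]$ in $K$ to be the main substantive ingredient.
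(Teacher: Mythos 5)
Your proof is correct and follows the same route as the paper's source: the paper does not prove this lemma itself but cites Theorem 1 of Campbell--Hughes--Pollack, whose argument is precisely your relative transfer $\rho=\frac{1}{[G:P]}\sum g\cdot(-)$ splitting $R^G$ off $R^P$ as a graded $R^G$-module, followed by descent of freeness over a common homogeneous system of parameters. All the steps (well-definedness of $\rho$, finiteness over the parameter subring, and graded projective implies free) check out.
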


 \begin{prop} \label{prop:2.3} The invariant rings $R_{2}^{P_{2}}$ and $R_{2}^{SL_{2}}$ are  free modules over $R_{2}^{SL_{2}^{2}}$ with
the ranks $q(q^{2}-1)^{2}$ and  $q(q^{2}-1)$ respectively.
\end{prop}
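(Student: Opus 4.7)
The plan is to derive both statements simultaneously by applying Lemma~\ref{lem:2.1} to the pair $(G,H) = (SL_{2}^{2}, P_{2})$ and to the pair $(G,H) = (SL_{2}^{2}, SL_{2})$. To invoke that lemma in each case I need to verify three ingredients: that $H$ sits inside $SL_{2}^{2}$, that $R_{2}^{SL_{2}^{2}}$ is a polynomial ring, and that $R_{2}^{H}$ is Cohen--Macaulay.

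The first ingredient is already recorded in this section: the embedding $A\mapsto\left[\begin{smallmatrix}(A^{t})^{-1}&0\\0&A\end{smallmatrix}\right]$ places $SL_{2}$ inside $SL_{2}^{2}$, because $\det((A^{t})^{-1})=(\det A)^{-1}=1$ when $A\in SL_{2}$; consequently the subgroup $P_{2}\subseteq SL_{2}$ also lies inside $SL_{2}^{2}$. The second ingredient is precisely the content of Kemper's result quoted above, which exhibits $R_{2}^{SL_{2}^{2}}$ as the polynomial algebra $F_{q}[d_{2,2},c_{2,1},d_{2,2}^{\ast},c_{2,1}^{\ast}]$ on the four Dickson-type generators.

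The third ingredient splits in two and draws on results already cited in the introduction. For $H=P_{2}$, the Bonnaf\'{e}--Kemper theorem \cite{BK2011} asserts that $R_{2}^{P_{2}}$ is a hypersurface, hence Cohen--Macaulay. For $H=SL_{2}$, Theorem~1 of Campbell--Hughes--Pollack \cite{CHP1991} gives Cohen--Macaulayness of $R_{2}^{SL_{2}}$ directly. (Alternatively, since $P_{2}$ is a Sylow $p$-subgroup of $SL_{2}$, one could derive the $SL_{2}$-case from the $P_{2}$-case via Lemma~\ref{prop:2.2}, but the direct citation suffices.)

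With all hypotheses satisfied, Lemma~\ref{lem:2.1} yields that $R_{2}^{P_{2}}$ and $R_{2}^{SL_{2}}$ are free modules over $R_{2}^{SL_{2}^{2}}$ of ranks $[SL_{2}^{2}:P_{2}]$ and $[SL_{2}^{2}:SL_{2}]$ respectively. Using $|SL_{2}^{2}|=q^{2}(q^{2}-1)^{2}$, $|P_{2}|=q$, and $|SL_{2}|=q(q^{2}-1)$, the two indices evaluate to $q(q^{2}-1)^{2}$ and $q(q^{2}-1)$, matching the asserted ranks. There is no real obstacle to this argument: the entire proof is the assembly of previously cited Cohen--Macaulay results with Lemma~\ref{lem:2.1}, and the only step requiring a moment of care is the verification that the embedding lands in $SL_{2}^{2}$ rather than merely $GL_{2}^{2}$.
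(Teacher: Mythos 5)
Your proof is correct and follows essentially the same route as the paper: establish Cohen--Macaulayness of $R_{2}^{P_{2}}$ via the Bonnaf\'{e}--Kemper hypersurface result and of $R_{2}^{SL_{2}}$ via the Sylow argument (Campbell--Hughes--Pollack), then invoke Lemma~\ref{lem:2.1} with the polynomiality of $R_{2}^{SL_{2}^{2}}$ to get freeness and the ranks $[SL_{2}^{2}:P_{2}]=q(q^{2}-1)^{2}$ and $[SL_{2}^{2}:SL_{2}]=q(q^{2}-1)$. The only cosmetic differences are your explicit check that the embedding lands in $SL_{2}^{2}$ and your phrasing of the Campbell--Hughes--Pollack citation (their theorem is exactly Lemma~\ref{prop:2.2}, so it still uses the $P_{2}$ case as input rather than giving $R_{2}^{SL_{2}}$ ``directly''), neither of which affects the argument.
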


\begin{proof} By the result of Bonnaf\'{e} and Kemper (\cite{BK2011}, Theorem 2.4),
$R_{2}^{P_{2}}$ is a hypersurface and so Cohen-Macaulay. Since the group $P_{2}$ is a Sylow $p$-subgroup of $SL_{2}$,  it follows from Lemma \ref{prop:2.2}
that  $R_{2}^{SL_{2}}$ is also Cohen-Macaulay. Now the invariant ring $R_{2}^{SL_{2}^{2}}$ is a polynomial algebra generated by $\left\{d_{2,2},c_{2,1},d_{2,2}^{\ast},c_{2,1}^{\ast}\right\}$, and
$\left\{d_{2,2},c_{2,1},d_{2,2}^{\ast},c_{2,1}^{\ast}\right\}$ is a homogeneous system of parameters for $R_{2}^{P_{2}}$ and $R_{2}^{SL_{2}}$, thus
they are free modules over $R_{2}^{SL_{2}^{2}}$. By Lemma \ref{lem:2.1}, the corresponding  ranks are equal to $q(q^{2}-1)^{2}$ and  $q(q^{2}-1)$ respectively.
\end{proof}

  We define $\phi_{1}:=x_{1}, \phi_{2}:=x_{2}^{q}-x_{2}x_{1}^{q-1},$ and $u_{0}:=x_{1}y_{1}+x_{2}y_{2}$. Then $\phi_{1}^{\ast}=y_{2}$, $\phi_{2}^{\ast}=y_{1}^{q}-y_{1}y_{2}^{q-1}$, and $u_{0}^{\ast}=u_{0}$.
In \cite{BK2011}, it is proved that the invariant ring
$R_{2}^{P_{2}}$ is a hypersurface and generated by
$\{\phi_{1},\phi_{2},\phi_{1}^{\ast},\phi_{2}^{\ast},u_{0}\}$ with the single relation
\begin{equation}\label{eq:2.3}
u_{0}^{q}=(\phi_{1}\phi_{2})^{q-1}u_{0}+\phi_{1}^{q}\phi_{2}^{\ast}+(\phi_{1}^{\ast})^{q}\phi_{2}.
\end{equation}
It is easy to verify  that
\begin{eqnarray}
\phi_{1}\phi_{2}&=&-d_{2,2}\label{eq:2.4}\\
\phi_{1}^{\ast}\phi_{2}^{\ast}&=&-d_{2,2}^{\ast}\label{eq:2.5}\\
\phi_{1}^{q(q-1)+1}&=&\phi_{1}c_{2,1}-d_{2,2}\phi_{2}^{q-2}\label{eq:2.6}\\
(\phi_{1}^{\ast})^{q(q-1)+1}&=&(\phi_{1}^{\ast})c_{2,1}^{\ast}-d_{2,2}^{\ast}(\phi_{2}^{\ast})^{q-2}\label{eq:2.7}\\
\phi_{2}^{q-1}&=&-\phi_{1}^{q(q-1)}+c_{2,1}\label{eq:2.8}\\
(\phi_{2}^{\ast})^{q-1}&=&-(\phi_{1}^{\ast})^{q(q-1)}+c_{2,1}^{\ast}\label{eq:2.9}.
\end{eqnarray}

Put \begin{equation}
\mathds{P}=\left\{ \begin{aligned}
\phi_{1}^{i}(\phi_{1}^{\ast})^{j}u_{0}^{k},&\quad 0\leq i,j\leq q(q-1),~~ 0\leq k\leq q-1;\\
\phi_{1}^{i}(\phi_{2}^{\ast})^{j}u_{0}^{k},&\quad 0\leq i\leq q(q-1),~~ 1\leq j\leq q-2, ~~0\leq k\leq q-1;\\
(\phi_{1}^{\ast})^{i}\phi_{2}^{j}u_{0}^{k},&\quad 0\leq i\leq q(q-1),~~ 1\leq j\leq q-2,~~ 0\leq k\leq q-1;\\
\phi_{2}^{i}(\phi_{2}^{\ast})^{j}u_{0}^{k},&\quad 1\leq i,j\leq q-2,~~ 0\leq k\leq q-1
 \end{aligned} \right\}.
 \end{equation}

 \begin{prop} \label{prop:2.4}
All elements in the set $\mathds{P}$ form a free $R_{2}^{SL_{2}^{2}}$-module basis of
$R_{2}^{P_{2}}$.
\end{prop}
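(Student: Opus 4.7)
The plan is to factor the extension $R_2^{SL_2^2} \subseteq R_2^{P_2}$ through the intermediate subring $A := F_q[\phi_1, \phi_2, \phi_1^*, \phi_2^*]$. Since $\phi_1, \phi_2$ involve only the $x$-variables while $\phi_1^*, \phi_2^*$ involve only the $y$-variables, and each of these two pairs is algebraically independent, $A = F_q[\phi_1,\phi_2] \otimes_{F_q} F_q[\phi_1^*,\phi_2^*]$ is itself a polynomial ring in four variables. The hypersurface relation (2.3) is monic in $u_0$ of degree $q$ with coefficients in $A$, so that $R_2^{P_2} = A[u_0] \cong A[T]/\bigl(T^q - (\phi_1\phi_2)^{q-1}T - \phi_1^q\phi_2^* - (\phi_1^*)^q\phi_2\bigr)$ is visibly a free $A$-module with basis $\{u_0^k : 0\le k\le q-1\}$. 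It therefore suffices to exhibit a basis of $A$ over $R_2^{SL_2^2}$ that, when multiplied by $\{u_0^k\}$, reproduces $\mathds{P}$.

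Since $R_2^{SL_2^2}$ analogously decomposes as $F_q[d_{2,2}, c_{2,1}] \otimes_{F_q} F_q[d_{2,2}^*, c_{2,1}^*]$, the problem further splits into showing that $F_q[\phi_1, \phi_2] = F_q[x_1,x_2]^{P_2}$ is free over $F_q[d_{2,2}, c_{2,1}] = F_q[x_1,x_2]^{SL_2}$ with basis $B := \{\phi_1^i : 0\le i\le q(q-1)\} \cup \{\phi_2^j : 1\le j\le q-2\}$. Freeness of rank $[SL_2:P_2] = q^2-1$ follows from Lemma 2.1, and $|B| = (q^2-q+1) + (q-2) = q^2-1$. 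Since a surjection between free modules of the same finite rank is automatically an isomorphism, it suffices to prove that $B$ spans. The spanning claim is a Gr\"obner-style reduction driven by (2.4), (2.6), and (2.8): for a suitable monomial order the leading terms are $\phi_1\phi_2$, $\phi_1^{q(q-1)+1}$, and $\phi_2^{q-1}$, whose complementary standard monomials are precisely $B$. The $\ast$-involution yields the analogous basis $B^*$ of $F_q[\phi_1^*,\phi_2^*]$ over $F_q[d_{2,2}^*, c_{2,1}^*]$.

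Tensoring gives $B\cdot B^*$ as a free $R_2^{SL_2^2}$-basis of $A$; the four combinations (pure $\phi_1$ with pure $\phi_1^*$, pure $\phi_1$ with $\phi_2^*$, $\phi_2$ with pure $\phi_1^*$, and $\phi_2$ with $\phi_2^*$) match the four blocks defining $\mathds{P}$. Multiplying by $\{u_0^k : 0\le k\le q-1\}$ then produces exactly $\mathds{P}$, of total size $q\cdot(q^2-1)^2$, in agreement with the rank from Proposition 2.3. The main obstacle is the spanning step for $B$: while (2.4), (2.6), (2.8) are the natural rewriting rules, one must select a single monomial order compatible with all three and verify that iterated reduction terminates inside the $F_q[d_{2,2}, c_{2,1}]$-span of $B$. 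The subtlety is that substituting (2.8) into expressions already reduced by (2.6) can in principle regenerate high powers of $\phi_1$ or mixed products with $\phi_2$, so termination requires a carefully chosen ordering rather than naive rewriting.
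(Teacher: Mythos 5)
Your argument is correct and takes a genuinely different route from the paper. The paper's proof works directly with all monomials in the five hypersurface generators $\phi_{1},\phi_{2},\phi_{1}^{\ast},\phi_{2}^{\ast},u_{0}$, eliminates those made redundant by (2.3)--(2.9), and concludes by counting; your factorization through $A=F_{q}[\phi_{1},\phi_{2},\phi_{1}^{\ast},\phi_{2}^{\ast}]$ is more structured: the relation (2.3), being monic of degree $q$ in $u_{0}$, handles the $u_{0}$-direction for free, and the tensor decomposition into $x$- and $y$-parts reduces everything to the single rank-$(q^{2}-1)$ statement that $B$ is a basis of $F_{q}[x_{1},x_{2}]^{P_{2}}$ over the Dickson algebra. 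This buys transparency (the four blocks of $\mathds{P}$ are exactly the four quadrants of $B\cdot B^{\ast}$) at the small cost of invoking the standard identifications $F_{q}[x_{1},x_{2}]^{P_{2}}=F_{q}[\phi_{1},\phi_{2}]$ and $F_{q}[x_{1},x_{2}]^{SL_{2}}=F_{q}[d_{2,2},c_{2,1}]$ needed to get the rank $q^{2}-1$ from Lemma \ref{lem:2.1}. The one step you flag as an obstacle --- termination of the rewriting that shows $B$ spans --- dissolves if you argue by graded Nakayama rather than by rewriting inside $F_{q}[\phi_{1},\phi_{2}]$: since that ring is free of rank $q^{2}-1$ over $F_{q}[d_{2,2},c_{2,1}]$, it suffices that the images of $B$ span the $(q^{2}-1)$-dimensional quotient $F_{q}[\phi_{1},\phi_{2}]/(d_{2,2},c_{2,1})$. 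Modulo that ideal the relations (\ref{eq:2.4}), (\ref{eq:2.6}), (\ref{eq:2.8}) read $\phi_{1}\phi_{2}\equiv 0$, $\phi_{1}^{q(q-1)+1}\equiv 0$, $\phi_{2}^{q-1}\equiv -\phi_{1}^{q(q-1)}$, so every monomial $\phi_{1}^{a}\phi_{2}^{b}$ reduces in at most two steps: mixed monomials and $\phi_{1}^{a}$ with $a>q(q-1)$ vanish, $\phi_{2}^{q-1}$ becomes $-\phi_{1}^{q(q-1)}\in B$, and $\phi_{2}^{b}$ with $b\geq q$ becomes $-\phi_{1}^{q(q-1)}\phi_{2}^{b-q+1}\equiv 0$. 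No monomial order is needed, and a spanning set of cardinality equal to the rank is a basis, as you note. With that step closed, your proof is complete and, if anything, more rigorous than the paper's, whose opening reduction (``no loss of generality a free basis consists of monomials'') is itself just a spanning-plus-counting argument in disguise.
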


 \begin{proof}
No loss of generality, we assume that a free $R_{2}^{SL_{2}^{2}}$-module  basis
for $R_{2}^{P_{2}}$ consists of monomials $\phi_{1}^{i}(\phi_{1}^{\ast})^{j}\phi_{2}^{r}(\phi_{2}^{\ast})^{s}u_{0}^{t}$.
Let $I_{P}$ be the homogeneous ideal  generated by $\left\{d_{2,2},c_{2,1},d_{2,2}^{\ast},c_{2,1}^{\ast}\right\}$ in $R_{2}^{P_{2}}$.
We see that in (\ref{eq:2.4})-(\ref{eq:2.7}), each monomial on its left-hand side belong to $I$ and so none of these monomials
and their powers will
appear in a free basis. The (\ref{eq:2.8}) and (\ref{eq:2.9}) indicate that one of $\phi_{2}^{q-1}$ and $\phi_{1}^{q(q-1)}$
$\left((\phi_{2}^{\ast})^{q-1}\textrm{ and }(\phi_{1}^{\ast})^{q(q-1)}\right)$ may appear in a free basis but not both. We choose $\phi_{2}^{q-1}$ and $\phi_{1}^{q(q-1)}$
as the elements in a free basis. Moreover, it follows from (\ref{eq:2.3}) that the powers of $u_{0}$ larger than $q-1$ do not appear in a free basis.
The set of all monomials not eliminated from a free basis is just $\mathds{P}$.  We observe that $\mathds{P}$ has exactly
 $q(q^{2}-1)^{2}$ elements, thus it must be a free basis.
\end{proof}

 \section{$SL_{2}$-Invariants}
\setcounter{equation}{0}
\renewcommand{\theequation}
{3.\arabic{equation}}

\setcounter{theorem}{0}
\renewcommand{\thetheorem}
{3.\arabic{theorem}}

The  purpose of this section is to construct a free $R_{2}^{SL_{2}^{2}}$-module basis of  $R_{2}^{SL_{2}}$  and to compute
the corresponding  Hilbert series $H\left(R_{2}^{SL_{2}},t\right)$. We define
\begin{equation}\label{eq:3.1}
u_{1}:=x_{1}^{q}y_{1}+x_{2}^{q}y_{2}.
 \end{equation} Then  $u_{1}^{\ast}=x_{1}y_{1}^{q}+x_{2}y_{2}^{q}.$
 By (\ref{eq:2.1}) and (\ref{eq:2.2}), we see that $u_{1}, u_{0}$, and $u_{1}^{\ast}$ are in $R_{2}^{GL_{2}}$ and thus  belong to $R_{2}^{SL_{2}}$. Moreover, it is not
difficult to check the following identities from the definitions:
\begin{align}
u_{0}^{q+1}&=u_{1}u_{1}^{\ast}-d_{2,2}d_{2,2}^{\ast}\label{eq:3.2}\\
u_{1}^{q}&=c_{2,1}u_{0}^{q}-(d_{2,2})^{q-1}u_{1}^{\ast}\label{eq:3.3}\\
(u_{1}^{\ast})^{q}&=c_{2,1}^{\ast}u_{0}^{q}-(d_{2,2}^{\ast})^{q-1}u_{1}\label{eq:3.4}\\
u_{1}&=\phi_{1}^{q-1}u_{0}+\phi_{1}^{\ast}\phi_{2}\label{eq:3.5}\\
u_{1}^{\ast}&=(\phi_{1}^{\ast})^{q-1}u_{0}+\phi_{1}\phi_{2}^{\ast}\label{eq:3.6}.
\end{align}

 We also define
\begin{equation}\label{eq:3.7}
h_{s}:=\frac{u_{1}^{s+1}(d_{2,2}^{\ast})^{q-s-1}+(u_{1}^{\ast})^{q-s}d_{2,2}^{s}}{u_{0}^{q}}\quad(s=0,1,\cdots,q-1).
\end{equation} Obviously, $h_{s}^{\ast}=h_{q-1-s}$ for all $s$.
In particular,  $h_{0}=c_{2,1}^{\ast}$ and $h_{q-1}=c_{2,1}$.

 \begin{lem}\label{lem:3.1}
Each $h_{s}$ is an $SL_{2}$-invariant polynomial.
\end{lem}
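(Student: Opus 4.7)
The task has two parts: show that $h_s$ is $SL_2$-invariant, and show that $h_s$ is actually a polynomial. The first part is essentially free: all of $u_0$, $u_1$, $u_1^\ast$ are $GL_2$-invariants (and hence $SL_2$-invariants) by the remark just after (\ref{eq:3.1}), while $d_{2,2}$ and $d_{2,2}^\ast$ are $SL_2$-invariants by the Kemper result quoted in Section~2. So as an element of the fraction field of $R_2$, $h_s$ is automatically fixed by $SL_2$. The real content of the lemma is showing $h_s\in R_2$, i.e.\ that $u_0^q$ divides the numerator $N_s := u_1^{s+1}(d_{2,2}^\ast)^{q-s-1} + (u_1^\ast)^{q-s}d_{2,2}^s$ in $R_2$.

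My plan for the divisibility is to multiply $N_s$ by $(u_1^\ast)^s$ so that the first summand acquires a factor $(u_1 u_1^\ast)^s$ and the second becomes $(u_1^\ast)^q d_{2,2}^s$. Now apply (\ref{eq:3.2}) to rewrite $(u_1 u_1^\ast)^s = (u_0^{q+1} + d_{2,2}d_{2,2}^\ast)^s$ and expand by the binomial theorem, and apply (\ref{eq:3.4}) to rewrite $(u_1^\ast)^q = c_{2,1}^\ast u_0^q - (d_{2,2}^\ast)^{q-1}u_1$. The key observation I expect is that the $k=0$ term of the binomial expansion, namely $u_1 d_{2,2}^s (d_{2,2}^\ast)^{q-1}$, cancels exactly against the term $-(d_{2,2}^\ast)^{q-1} u_1 d_{2,2}^s$ coming from (\ref{eq:3.4}). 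Every other surviving term carries either $u_0^{(q+1)k}$ with $k\geq 1$ or the explicit factor $c_{2,1}^\ast u_0^q$, so all remaining terms are divisible by $u_0^q$. This shows $u_0^q \mid (u_1^\ast)^s N_s$ in $R_2$.

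To conclude, I would invoke unique factorization in $R_2 = F_q[x_1,x_2,y_1,y_2]$: the polynomial $u_0 = x_1y_1 + x_2y_2$ is an irreducible (hence prime) element, and a direct computation shows $u_0\nmid u_1^\ast$ (reducing $x_1\equiv -x_2 y_2/y_1$ modulo $u_0$ in the fraction field gives $u_1^\ast \equiv x_2 y_2(y_2^{q-1}-y_1^{q-1})/y_1^{q-1}\not\equiv 0$). Therefore $u_0^q$ is coprime to $(u_1^\ast)^s$, and the divisibility $u_0^q\mid (u_1^\ast)^s N_s$ forces $u_0^q\mid N_s$, so $h_s\in R_2$.

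The main obstacle is the divisibility step, specifically spotting the right quantity to multiply by so that one of the identities among $u_0,u_1,u_1^\ast,d_{2,2},d_{2,2}^\ast$ produces the cancellation of the unwanted $u_0$-free term. Once $(u_1^\ast)^s$ is recognized as the correct multiplier, the remainder is a routine binomial expansion; the cases $s=0$ and $s=q-1$ fall out immediately (the latter by symmetry, multiplying instead by $u_1^{q-s-1}$ and using (\ref{eq:3.3})), and everything else is purely bookkeeping.
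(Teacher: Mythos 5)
Your argument is correct and is essentially the paper's own proof: the paper likewise gets the $SL_{2}$-invariance for free and reduces the lemma to the divisibility $u_{0}^{q}\mid N_{s}$, which it establishes by clearing denominators and using exactly the identities (\ref{eq:3.2}) and (\ref{eq:3.4}) to cancel the single $u_{0}$-free term, then invoking coprimality with $u_{0}^{q}$. The only difference is cosmetic --- the paper multiplies by $(d_{2,2}^{\ast})^{s}$ rather than $(u_{1}^{\ast})^{s}$, which makes the coprimality step immediate since $d_{2,2}^{\ast}$ involves only the $y$-variables, whereas your choice requires the small check that $u_{0}\nmid u_{1}^{\ast}$ (in which, incidentally, the residue should read $x_{2}y_{2}(y_{2}^{q-1}-y_{1}^{q-1})$ without the factor $y_{1}^{-(q-1)}$, though this does not affect the conclusion).
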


 \begin{proof} It follows from (\ref{eq:3.4}) that $u_{1}^{s+1}(d_{2,2}^{\ast})^{q-1}=c_{2,1}^{\ast}u_{0}^{q}u_{1}^{s}-(u_{1}^{\ast})^{q}u_{1}^{s}$. Thus
\begin{eqnarray*}
h_{s}&=&\frac{u_{1}^{s+1}(d_{2,2}^{\ast})^{q-s-1}+(u_{1}^{\ast})^{q-s}d_{2,2}^{s}}{u_{0}^{q}}\\
&=&\frac{u_{1}^{s+1}(d_{2,2}^{\ast})^{q-1}+(u_{1}^{\ast})^{q-s}(d_{2,2}d_{2,2}^{\ast})^{s}}{u_{0}^{q}(d_{2,2}^{\ast})^{s}}\\
&=&\frac{c_{2,1}^{\ast}u_{0}^{q}u_{1}^{s}-(u_{1}^{\ast})^{q}u_{1}^{s}+(u_{1}^{\ast})^{q-s}(d_{2,2}d_{2,2}^{\ast})^{s}}{u_{0}^{q}(d_{2,2}^{\ast})^{s}}.
\end{eqnarray*}
By (\ref{eq:3.2}) we have
\begin{eqnarray*}
(d_{2,2}d_{2,2}^{\ast})^{s}&=&\left(u_{1}u_{1}^{\ast}-u_{0}^{q+1}\right)^{s}=\sum_{k=0}^{s}(-1)^{s-k}{s\choose k}(u_{1}u_{1}^{\ast})^{k}\left(u_{0}^{q+1}\right)^{s-k}.
 \end{eqnarray*}
Hence
 \begin{eqnarray}\label{eq:3.8}
h_{s}=\frac{c_{2,1}^{\ast}u_{0}^{q}u_{1}^{s}+(u_{1}^{\ast})^{q-s}\left(\sum_{k=0}^{s-1}(-1)^{s-k}{s\choose k}(u_{1}u_{1}^{\ast})^{k}\left(u_{0}^{q+1}\right)^{s-k}\right)}{u_{0}^{q}(d_{2,2}^{\ast})^{s}}.
 \end{eqnarray}
Every term in the numerator of $h_{s}$ contains $u_{0}^{q}$ as a factor and
$u_{0}^{q}$ is prime to $(d_{2,2}^{\ast})^{s}$. Thus the numerator of $h_{s}$ is divisible by $u_{0}^{q}(d_{2,2}^{\ast})^{s}$, i.e.,
$h_{s}$ is a polynomial. The $SL_{2}$-invariance of $h_{s}$ holds immediately.
\end{proof}

 \begin{lem}\label{lem:3.2}
For all $s=0,1,\cdots,q-1$, the following identities hold:
\begin{equation}\label{eq:3.9}
\begin{split}
u_{1}^{\ast}h_{s}&=u_{0}u_{1}^{s}(d_{2,2}^{\ast})^{q-s-1}+d_{2,2}h_{s-1}\\
u_{1}h_{s}&=u_{0}(u_{1}^{\ast})^{q-s-1}d_{2,2}^{s}+d_{2,2}^{\ast}h_{s+1}.
\end{split}
 \end{equation}
In particular,  we have
\begin{eqnarray}
u_{0}(u_{1}^{\ast})^{q-1}&=&u_{1}h_{0}-d_{2,2}^{\ast}h_{1},\label{eq:3.10}\\
u_{0}u_{1}^{q-1}&=&u_{1}^{\ast}h_{q-1}-d_{2,2}h_{q-2}.\label{eq:3.11}
 \end{eqnarray}
\end{lem}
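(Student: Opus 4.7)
The plan is to establish the first identity in (3.9) by direct substitution of the definition (3.7), using the product relation (3.2) as the single algebraic input, and then deduce the second identity by applying the $\ast$-involution. The particular cases (3.10) and (3.11) will fall out as specializations.

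First I would multiply the defining relation $u_0^q \cdot h_s = u_1^{s+1}(d_{2,2}^\ast)^{q-s-1} + (u_1^\ast)^{q-s} d_{2,2}^s$ through by $u_1^\ast$. The leading piece on the right becomes $u_1^{s+1} u_1^\ast (d_{2,2}^\ast)^{q-s-1}$, and here the key move is to rewrite the factor $u_1 u_1^\ast$ using (3.2) as $u_0^{q+1} + d_{2,2} d_{2,2}^\ast$. Expanding and then dividing through by $u_0^q$, the $u_0^{q+1}$ part produces exactly $u_0 u_1^s (d_{2,2}^\ast)^{q-s-1}$, the first target summand. The two surviving terms share a common factor of $d_{2,2}$; once it is extracted, what is left inside is
\[
\frac{u_1^s (d_{2,2}^\ast)^{q-s} + (u_1^\ast)^{q-s+1} d_{2,2}^{s-1}}{u_0^q},
\]
which is precisely the defining expression (3.7) with $s$ replaced by $s-1$, i.e.\ $h_{s-1}$. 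This closes the first identity.

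For the second identity I would simply apply $\ast$ to the one just proved. Since $u_0$ is $\ast$-fixed, $u_1\leftrightarrow u_1^\ast$, $d_{2,2}\leftrightarrow d_{2,2}^\ast$, and $h_s^\ast=h_{q-1-s}$, the re-indexing $t=q-1-s$ turns the $\ast$-image into the claimed formula for $u_1 h_t$. The particular cases are then immediate: substituting $s=q-1$ in the first identity gives (3.11), and $s=0$ in the second gives (3.10).

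I do not anticipate a real obstacle. The only care required is exponent bookkeeping in the final collapse to $h_{s-1}$, specifically that $(u_1^\ast)^{q-s+1}d_{2,2}^{s-1}$ matches the $(u_1^\ast)^{q-(s-1)}d_{2,2}^{s-1}$ term of (3.7); this is an equality of integers. The boundary values $h_{-1}$ and $h_q$, which would arise at the extreme $s$ in the ``wrong'' identity, are not needed because the specializations (3.10) and (3.11) use $s=0$ in the second identity and $s=q-1$ in the first, where the indices $h_1$ and $h_{q-2}$ remain in range.
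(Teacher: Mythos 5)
Your proposal is correct and follows essentially the same route as the paper: both arguments reduce the first identity in (\ref{eq:3.9}) to the substitution $u_1u_1^\ast=u_0^{q+1}+d_{2,2}d_{2,2}^\ast$ from (\ref{eq:3.2}) inside the defining expression (\ref{eq:3.7}), after which the remaining terms assemble into $d_{2,2}h_{s-1}$. The only cosmetic difference is that you obtain the second identity by applying the $\ast$-involution with $h_s^\ast=h_{q-1-s}$, where the paper repeats the direct computation; your remark about keeping the indices of $h$ in range for (\ref{eq:3.10}) and (\ref{eq:3.11}) is a welcome extra precaution.
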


 \begin{proof}
Indeed, it follows from (\ref{eq:3.7}) that
\begin{eqnarray*}
u_{1}^{\ast}h_{s}&=&\frac{u_{1}^{\ast}u_{1}^{s+1}(d_{2,2}^{\ast})^{q-s-1}+(u_{1}^{\ast})^{q-s+1}d_{2,2}^{s}}{u_{0}^{q}}\\
d_{2,2}h_{s-1}&=&\frac{d_{2,2}u_{1}^{s}(d_{2,2}^{\ast})^{q-s}+(u_{1}^{\ast})^{q-s+1}d_{2,2}^{s}}{u_{0}^{q}}.
 \end{eqnarray*}
Thus
\begin{eqnarray*}
u_{1}^{\ast}h_{s}-d_{2,2}h_{s-1}&=&\frac{u_{1}^{\ast}u_{1}^{s+1}(d_{2,2}^{\ast})^{q-s-1}-d_{2,2}u_{1}^{s}(d_{2,2}^{\ast})^{q-s}}{u_{0}^{q}}\\
&=&\frac{(u_{1}^{\ast}u_{1})u_{1}^{s}(d_{2,2}^{\ast})^{q-s-1}-d_{2,2}u_{1}^{s}(d_{2,2}^{\ast})^{q-s}}{u_{0}^{q}}\quad(\textrm{by }(\ref{eq:3.2}))\\
&=&\frac{(u_{0}^{q+1}+d_{2,2}d_{2,2}^{\ast})u_{1}^{s}(d_{2,2}^{\ast})^{q-s-1}-d_{2,2}u_{1}^{s}(d_{2,2}^{\ast})^{q-s}}{u_{0}^{q}}\\
&=&u_{0}u_{1}^{s}(d_{2,2}^{\ast})^{q-s-1}.
 \end{eqnarray*}
Similarly, one can prove the other identities.
\end{proof}

 We are now ready to construct a free $R_{2}^{SL_{2}^{2}}$-module basis for $R_{2}^{SL_{2}}$, i.e., to prove the following
theorem.

 \begin{theorem}\label{thm:3.3} The invariant ring
$R_{2}^{SL_{2}}$ is a free  $R_{2}^{SL_{2}^{2}}$-module on the following set
\begin{equation}
\mathds{S}=\left\{ \begin{aligned}
(u_{1}^{\ast})^{i}u_{1}^{j},&\quad 0\leq i,j\leq q-1;\\
(u_{1}^{\ast})^{i}u_{1}^{j}u_{0}^{k},&\quad 0\leq i,j\leq q-2;~~ 1\leq k\leq q;\\
h_{s}u_{0}^{k},&\quad 1\leq s\leq q-2;~~ 0\leq k\leq q-1
\end{aligned} \right\}.
\end{equation}
\end{theorem}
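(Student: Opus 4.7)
By Proposition \ref{prop:2.3}, $R_2^{SL_2}$ is a free $R_2^{SL_2^2}$-module of rank $q(q^2-1)$. A direct count gives $|\mathds{S}| = q^2 + q(q-1)^2 + q(q-2) = q(q^2-1)$, matching the rank. Moreover, each element of $\mathds{S}$ lies in $R_2^{SL_2}$: the invariants $u_0, u_1, u_1^*$ belong to $R_2^{GL_2} \subseteq R_2^{SL_2}$ by (\ref{eq:2.1})--(\ref{eq:2.2}), while each $h_s$ is $SL_2$-invariant by Lemma \ref{lem:3.1}. Because a generating set of a finitely generated free module over a commutative ring whose cardinality equals the rank is automatically a basis (the corresponding map from the standard free module onto itself is a surjective endomorphism, hence bijective), it therefore suffices to show that $\mathds{S}$ generates $R_2^{SL_2}$ as an $R_2^{SL_2^2}$-module.

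The natural approach is to mimic the proof of Proposition \ref{prop:2.4}. First I would argue that $\{u_0, u_1, u_1^*, h_1, \ldots, h_{q-2}\}$, together with the coefficient ring $R_2^{SL_2^2}$, generates $R_2^{SL_2}$ as an $F_q$-algebra. One clean route is the relative transfer $\rho\colon R_2^{P_2}\rightarrow R_2^{SL_2}$ obtained by averaging over the $q^2-1$ cosets of $P_2$ in $SL_2$; this is well defined since $\gcd(q^2-1,p)=1$, is $R_2^{SL_2^2}$-linear, and restricts to the identity on $R_2^{SL_2}$, hence is surjective onto it. Applying $\rho$ to the basis $\mathds{P}$ from Proposition \ref{prop:2.4} yields a finite spanning set of $R_2^{SL_2}$ over $R_2^{SL_2^2}$, whose images can be rewritten in terms of $u_0, u_1, u_1^*$ and the $h_s$ via the identities (\ref{eq:3.5})--(\ref{eq:3.6}) and (\ref{eq:3.10})--(\ref{eq:3.11}).

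The second half is the monomial reduction. Given a monomial in the generators with a coefficient in $R_2^{SL_2^2}$, one reduces it to an $R_2^{SL_2^2}$-linear combination of elements of $\mathds{S}$ by repeated application of the identities of Section~3: (\ref{eq:3.3}) and (\ref{eq:3.4}) cap the exponents of $u_1$ and $u_1^*$ at $q-1$; (\ref{eq:3.2}) rewrites $u_0^{q+1}$ in terms of $u_1 u_1^*$, which together with (\ref{eq:3.10})--(\ref{eq:3.11}) forces the exponents of $u_1, u_1^*$ down to $q-2$ whenever $u_0$ is present, explaining the bounds in the second subset of $\mathds{S}$; the two formulas in (\ref{eq:3.9}) absorb every product $u_1 h_s$ and $u_1^* h_s$, so that each $h_s$ appears at most once and never alongside $u_1$ or $u_1^*$, yielding the third subset; finally, $h_0 = c_{2,1}^*$ and $h_{q-1} = c_{2,1}$ are absorbed into the coefficient ring, explaining the range $1 \leq s \leq q-2$.

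The main obstacle will be orchestrating these reductions so that the surviving monomials are exactly those in $\mathds{S}$, with no double counting and no omissions; as in Proposition \ref{prop:2.4}, the cardinality identity $|\mathds{S}| = q(q^2-1)$ furnishes the decisive consistency check. Once $\mathds{S}$ is shown to span $R_2^{SL_2}$ over $R_2^{SL_2^2}$, the conclusion follows from Proposition \ref{prop:2.3}.
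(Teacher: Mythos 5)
Your reduction of the problem to a spanning statement is legitimate: since Proposition \ref{prop:2.3} gives that $R_{2}^{SL_{2}}$ is free of rank $q(q^{2}-1)$ over $R_{2}^{SL_{2}^{2}}$ and $|\mathds{S}|=q(q^{2}-1)$, a generating set of that cardinality is automatically a free basis (surjective endomorphisms of finitely generated modules over a commutative ring are injective). The transfer $\rho=\mathrm{Tr}_{P_{2}}^{SL_{2}}$ is indeed well defined, $R_{2}^{SL_{2}^{2}}$-linear, and restricts to the identity on $R_{2}^{SL_{2}}$, so $\rho(\mathds{P})$ spans $R_{2}^{SL_{2}}$ over $R_{2}^{SL_{2}^{2}}$. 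The gap is in the very next sentence: you assert that the images $\rho\bigl(\phi_{1}^{i}(\phi_{1}^{\ast})^{j}\phi_{2}^{r}(\phi_{2}^{\ast})^{s}u_{0}^{k}\bigr)$ ``can be rewritten in terms of $u_{0},u_{1},u_{1}^{\ast}$ and the $h_{s}$ via (\ref{eq:3.5})--(\ref{eq:3.6}) and (\ref{eq:3.10})--(\ref{eq:3.11}).'' Those identities go the wrong way: (\ref{eq:3.5})--(\ref{eq:3.6}) express $u_{1},u_{1}^{\ast}$ in terms of the $\phi$'s, and (\ref{eq:3.10})--(\ref{eq:3.11}) are relations among $SL_{2}$-invariants; none of them computes a transfer. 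Evaluating $\mathrm{Tr}_{P_{2}}^{SL_{2}}$ on the $q(q^{2}-1)^{2}$ monomials of $\mathds{P}$ is a genuinely hard computation (transfer computations are notoriously difficult in modular invariant theory), and the claim that the results land in the $R_{2}^{SL_{2}^{2}}$-algebra generated by $u_{0},u_{1},u_{1}^{\ast},h_{1},\dots,h_{q-2}$ is essentially the whole content of the theorem. Your subsequent monomial-reduction step is plausible but presupposes exactly this unproved algebra-generation statement, and the cardinality count cannot rescue it: matching cardinality certifies a basis only \emph{after} spanning is known.

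The paper takes the dual and much more tractable route: instead of proving that $\mathds{S}$ spans, it proves that the $q(q^{2}-1)$ elements of $\mathds{S}$ are linearly independent modulo the ideal $I_{S}=(d_{2,2},c_{2,1},d_{2,2}^{\ast},c_{2,1}^{\ast})R_{2}^{SL_{2}}$; by graded Nakayama this forces them to be a free basis. That computation is concrete because one can pass to the larger ideal $I_{P}\subset R_{2}^{P_{2}}$, where the explicit free basis $\mathds{P}$ of Proposition \ref{prop:2.4} and the identities (\ref{eq:2.3})--(\ref{eq:2.9}), (\ref{eq:3.2})--(\ref{eq:3.11}) let one verify nonvanishing of the top monomials $(u_{1}u_{1}^{\ast})^{q-1}$, $(u_{1}^{\ast})^{q-2}u_{1}^{q-2}u_{0}^{q}$, $h_{s}u_{0}^{q-1}$ and run a leading-term argument. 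If you want to salvage your approach, you would need to either carry out the transfer computation explicitly or replace it with an independence argument of this kind; as written, the spanning step is a missing proof, not a routine verification.
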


 \begin{proof}
Let $I_{P}$ and $I_{S}$ denote the homogeneous ideals generated by $\left\{d_{2,2},d_{2,2}^{\ast},c_{2,1},c_{2,1}^{\ast}\right\}$ in $R_{2}^{P_{2}}$ and $R_{2}^{SL_{2}}$ respectively. To prove that the $q(q^{2}-1)$ elements  in $\mathds{S}$ form a free basis, we have to verify two conditions:
\begin{enumerate}
  \item these elements do not belong to $I_{S}$; and
  \item they are linearly independent modulo $I_{S}$.
\end{enumerate}
Since $R_{2}^{SL_{2}}\subset R_{2}^{P_{2}}$, we have $I_{S}\subset I_{P}$. For the condition (1), it suffices to show that every element does not belong to $I_{P}$; namely, to show that $(u_{1}^{\ast})^{q-1}u_{1}^{q-1}$, $(u_{1}^{\ast})^{q-2}u_{1}^{q-2}u_{0}^{q}$ and $h_{s}u_{0}^{q-1}$ are not zero modulo $I_{P}$. We now are working on modulo  $I_{P}$, and we  check them one by one. First of all, it follows from (\ref{eq:3.5}) and (\ref{eq:3.6}) that
 \begin{eqnarray*}
(u_{1}u_{1}^{\ast})^{q-1}
&\equiv& (\phi_{1}\phi_{1}^{\ast})^{(q-1)^{2}}u_{0}^{2(q-1)}+u_{0}^{q-1}(\phi_{1}^{q(q-1)}(\phi_{2}^{\ast})^{q-1}+(\phi_{1}^{\ast})^{q(q-1)}\phi_{1}^{q-1})\\
&\equiv& (\phi_{1}\phi_{1}^{\ast})^{(q-1)^{2}}u_{0}^{2q-2}-2u_{0}^{q-1}(\phi_{1}\phi_{1}^{\ast})^{q(q-1)}.
 \end{eqnarray*}
By (\ref{eq:2.3}), we have
$(\phi_{1}\phi_{1}^{\ast})^{(q-1)^{2}}u_{0}^{2q-2}=(\phi_{1}\phi_{1}^{\ast})^{(q-1)^{2}}u_{0}^{q-2}((\phi_{1}\phi_{1}^{\ast})^{q-1}u_{0}+
\phi_{1}^{q}\phi_{2}^{\ast}+(\phi_{1}^{\ast})^{q}\phi_{2})
\equiv (\phi_{1}\phi_{1}^{\ast})^{q(q-1)}u_{0}^{q-1}.
$ Thus $(u_{1}u_{1}^{\ast})^{q-1}\equiv -(\phi_{1}\phi_{1}^{\ast})^{q(q-1)}u_{0}^{q-1}$ does not belong to $I_{P}$  because the latter is an element of the free basis $\mathds{P}$ by Proposition \ref{prop:2.4}. Secondly, from (\ref{eq:3.2}) we have $(u_{1}^{\ast})^{q-2}u_{1}^{q-2}u_{0}^{q+1}\equiv (u_{1}u_{1}^{\ast})^{q-1}$. Thus $(u_{1}^{\ast})^{q-2}u_{1}^{q-2}u_{0}^{q}$ is not an element in $I_{P}$. Finally, it follows from (\ref{eq:3.5}) and (\ref{eq:3.6})) that
$$h_{s}u_{0}^{q-1}\equiv \frac{\left(\phi_{1}^{\ast}\phi_{2}\right)^{s+1}(d_{2,2}^{\ast})^{q-s-1}+\left(\phi_{1}\phi_{2}^{\ast}\right)^{q-s}d_{2,2}^{s}}{u_{0}}.$$
By (\ref{eq:2.4}) and (\ref{eq:2.5}), we eventually deduce that $h_{s}u_{0}^{q-1}\equiv (-1)^{s}\phi_{2}^{s}(\phi_{2}^{\ast})^{q-s-1}u_{0}^{q-1}$, which is not in $I_{P}$. Thus the condition (1) is satisfied.

For the condition (2), we need only to show that $\left\{(u_{1}^{\ast})^{i}u_{1}^{j}\right\}$, $\left\{(u_{1}^{\ast})^{i}u_{1}^{j}u_{0}^{k}\right\}$, and $\left\{h_{s}u_{0}^{k}\right\}$ are all linearly independent modulo $I_{S}$. We are working on modulo  $I_{S}$.
First, we assume that all $(u_{1}^{\ast})^{i}u_{1}^{j}$  are linearly dependent.  Then there exists an equation $\sum a_{ij}(u_{1}^{\ast})^{i}u_{1}^{j}\equiv 0$ where $a_{ij}$ in $F_{q}$.
We  order the monomials $(u_{1}^{\ast})^{i}u_{1}^{j}$ lexicographically using the exponent sequences $(i,j)$.
Let $a_{i_{0}j_{0}}\neq 0$ be the coefficient of the smallest monomial. Multiplying the left hand side of the preceding equation by
 $(u_{1}^{\ast})^{q-1-i_{0}}u_{1}^{q-1-j_{0}}$, we obtain
\begin{eqnarray*}
0 \equiv\sum a_{ij}(u_{1}^{\ast})^{q-1+i-i_{0}}u_{1}^{q-1+j-j_{0}}=a_{i_{0}j_{0}}(u_{1}^{\ast})^{q-1}u_{1}^{q-1}+\sum_{i>i_{0}\textrm{ and }
 j>j_{0}} a_{ij}(u_{1}^{\ast})^{q-1+i-i_{0}}u_{1}^{q-1+j-j_{0}}.
 \end{eqnarray*}
Combining this equation with (\ref{eq:3.3}) and (\ref{eq:3.4}) we get $a_{i_{0}j_{0}}(u_{1}^{\ast})^{q-1}u_{1}^{q-1}\equiv 0$. Thus $a_{i_{0}j_{0}}$ equals zero, and this is a contradiction. Second, assume that there exists an equation $\sum a_{ijk}(u_{1}^{\ast})^{i}u_{1}^{j}u_{0}^{k}\equiv 0$ and let $a_{i_{0}j_{0}k}\neq 0$ be the coefficient of the smallest monomial in the lexicographical order. Multiplying the left hand side of this equation by $(u_{1}^{\ast})^{q-2-i_{0}}u_{1}^{q-2-j_{0}}$, we obtain the resulting sum:
 \begin{equation*}
(u_{1}^{\ast})^{q-2}u_{1}^{q-2}\left(\sum_{k} a_{i_{0}j_{0}k}u_{0}^{k}\right)+\textrm{ other higher terms with larger powers of } u_{1}^{\ast}\textrm{ and }u_{1}.
\end{equation*}
It follows from (\ref{eq:3.10}) and (\ref{eq:3.11}) that $u_{0}(u_{1}^{\ast})^{q-1}\equiv 0$ and $
u_{0}u_{1}^{q-1}\equiv 0.$ Thus
\begin{equation}\label{eq:3.13}
(u_{1}^{\ast})^{q-2}u_{1}^{q-2}\left(\sum_{k} a_{i_{0}j_{0}k}u_{0}^{k}\right)\equiv 0.
\end{equation}
Let $k_{0}$ is the least integer for which $a_{i_{0}j_{0}k_{0}}\neq 0$. We then multiply the left side of (\ref{eq:3.13}) by $u_{0}^{q+1-k_{0}}$.
Similarly, it follows from (\ref{eq:3.2}) that  $a_{i_{0}j_{0}k_{0}}(u_{1}^{\ast})^{q-2}u_{1}^{q-2}u_{0}^{q+1} \equiv 0$, which implies that $a_{i_{0}j_{0}k_{0}}=0$, a contraction. The same method can be applied to $\left\{h_{s}u_{0}^{k}\right\}$.
Assume that all $h_{s}u_{0}^{k}$  are linearly dependent  with a relation $\sum a_{sk}h_{s}u_{0}^{k}\equiv 0$.
This equation can be rewritten as
\begin{equation}\label{eq:3.14}
\left(\sum_{s} a_{sk_{1}}h_{s}\right)u_{0}^{k_{1}}+\cdots+\left(\sum_{s} a_{sk_{m}}h_{s}\right)u_{0}^{k_{m}}\equiv 0,
\end{equation}
 where $0\leq k_{1}<\cdots<k_{m}\leq q-1$.
Multiplying the left hand side of (\ref{eq:3.14}) by
 $u_{0}^{q-1-k_{1}}$, we obtain
  $\left(\sum_{s} a_{sk_{1}}h_{s}\right)u_{0}^{q-1}+\sum_{s}\sum_{t\geq q} a_{st}h_{s}u_{0}^{t}\equiv 0.$
By (\ref{eq:3.7}), we see that for $t\geq q$, every $h_{s}u_{0}^{t}$ belongs to $I_{S}$. Thus if we write $b_{s}$ for $a_{sk_{1}}$, then
$\sum_{s} b_{s}\left(h_{s}u_{0}^{q-1}\right)\equiv 0.$ It is easy to see that all $h_{s}u_{0}^{q-1}$ are linearly independent modulo $I_{S}$ since
all $\phi_{2}^{s}(\phi_{2}^{\ast})^{p-s-1}u_{0}^{q-1}$ are linearly independent modulo $I_{P}$ and $I_{S}\subset I_{P}$.
 \end{proof}

Now we can compute the Hilbert series of $R_{2}^{SL_{2}}$.
We recall that the
Hilbert series of a graded algebra $\mathfrak{A}=\bigoplus_{i=0}^{\infty} \mathfrak{A}_{i}$ over a field $k$ is defined to be   $H(\mathfrak{A},t)=\sum_{i=0}^{\infty} \textrm{dim}_{k} \mathfrak{A}_{i} t^{i}$. The Hilbert series of a  polynomial algebra
$N=F_{q}[f_{1},\cdots,f_{r}]$ with degrees $|f_{i}|=d_{i}$ is equal to
$H(N,t)=\prod_{i=1}^{r}\left(1-t^{d_{i}}\right)^{-1}.$
Moreover, if $M$ is a free $N$-module with $\{m_{1},\cdots,m_{s}\}$ as basis elements of degrees $|m_{j}|=e_{j}$, we have the Hilbert series
$$H(M,t)=\frac{\sum_{j=1}^{s}t^{e_{j}}}{\prod_{i=1}^{r}\left(1-t^{d_{i}}\right)}.$$
All elements and their degrees in the basis $\mathds{S}$ of $R_{2}^{SL_{2}}$ as a free $R_{2}^{SL_{2}^{2}}$-module are listed in the following Table 1.
\begin{center}\footnotesize{
\begin{longtable}{|l|l|l|l|l|l|l|l|}
\caption[{$\mathds{S}$-the second invariants of $SL_{2}$}]{{\rm $\mathds{S}$-the second invariants of $SL_{2}$}} \label{2.3} \\
 \hline
\endfirsthead

\multicolumn{8}{c}%
{\footnotesize  \tablename\ \thetable{}-- continued from previous page} \\
\hline
\endhead

\hline \multicolumn{8}{|r|}{{Continued on next page}} \\ \hline
\endfoot

\hline
\endlastfoot
 Basis & 1 & $u_{1}$ & $u_{1}^{2}$ & $u_{1}^{3}$& $\cdots$& $u_{1}^{q-1}$ \\ \hline
 Degree        & 0 & $q+1$   & $2(q+1)$    & $3(q+1)$   & $\cdots$& $(q-1)(q+1)$ \\
 \hline
 Basis &  & $u_{2}$ & $u_{2}^{2}$ & $u_{2}^{3}$& $\cdots$& $u_{2}^{q-1}$ \\ \hline
 Degree        &  & $q+1$   & $2(q+1)$    & $3(q+1)$   & $\cdots$& $(q-1)(q+1)$ \\
 \hline
 Basis &  & $u_{1}u_{2}$ & $u_{1}u_{2}^{2}$ & $u_{1}u_{2}^{3}$& $\cdots$& $u_{1}u_{2}^{q-1}$ \\ \hline
 Degree        &  & $2(q+1)$   & $3(q+1)$    & $4(q+1)$   & $\cdots$& $q(q+1)$ \\
 \hline
 Basis &  & $u_{1}^{2}u_{2}$ & $u_{1}^{2}u_{2}^{2}$ & $u_{1}^{2}u_{2}^{3}$& $\cdots$& $u_{1}^{2}u_{2}^{q-1}$ \\ \hline
 Degree        &  & $3(q+1)$   & $4(q+1)$    & $5(q+1)$   & $\cdots$& $(q+1)(q+1)$ \\
 \hline
\quad\quad\vdots   &  & \quad\vdots & \quad\vdots  & \quad\vdots &\vdots  &\quad\quad\vdots  \\ \hline
 Basis &  & $u_{1}^{q-2}u_{2}$ & $u_{1}^{q-2}u_{2}^{2}$ & $u_{1}^{q-2}u_{2}^{3}$& $\cdots$& $u_{1}^{q-2}u_{2}^{q-1}$ \\ \hline
 Degree        &  & $(q-1)(q+1)$   & $q(q+1)$    & $(q+1)(q+1)$   & $\cdots$& $(2q-3)(q+1)$ \\
 \hline
 Basis &  & $u_{1}^{q-1}u_{2}$ & $u_{1}^{q-1}u_{2}^{2}$ & $u_{1}^{q-1}u_{2}^{3}$& $\cdots$& $u_{1}^{q-1}u_{2}^{q-1}$ \\ \hline
 Degree        &  & $q(q+1)$   & $(q+1)(q+1)$    & $(q+2)(q+1)$   & $\cdots$& $(2q-2)(q+1)$ \\
 \hline \hline
 Basis & $u_{0}^{k}$ & $u_{1}u_{0}^{k}$ & $u_{1}^{2}u_{0}^{k}$ & $u_{1}^{3}u_{0}^{k}$& $\cdots$& $u_{1}^{q-2}u_{0}^{k}$ \\ \hline
 Degree        & $2k$ & $2k+q+1$   & $2k+2(q+1)$    & $2k+3(q+1)$   & $\cdots$& $2k+(q-2)(q+1)$ \\
 \hline
 Basis &  & $u_{2}u_{0}^{k}$ & $u_{2}^{2}u_{0}^{k}$ & $u_{2}^{3}u_{0}^{k}$& $\cdots$& $u_{2}^{q-2}u_{0}^{k}$ \\ \hline
 Degree&  & $2k+q+1$   & $2k+2(q+1)$    & $2k+3(q+1)$   & $\cdots$& $2k+(q-2)(q+1)$ \\
 \hline
 Basis &  & $u_{1}u_{2}u_{0}^{k}$ & $u_{1}u_{2}^{2}u_{0}^{k}$ & $u_{1}u_{2}^{3}u_{0}^{k}$& $\cdots$& $u_{1}u_{2}^{q-2}u_{0}^{k}$ \\ \hline
 Degree&  & $2k+2(q+1)$   & $2k+3(q+1)$    & $2k+4(q+1)$   & $\cdots$& $2k+(q-1)(q+1)$ \\
 \hline
 Basis &  & $u_{1}^{2}u_{2}u_{0}^{k}$ & $u_{1}^{2}u_{2}^{2}u_{0}^{k}$ & $u_{1}^{2}u_{2}^{3}u_{0}^{k}$& $\cdots$& $u_{1}^{2}u_{2}^{q-2}u_{0}^{k}$ \\ \hline
 Degree &  & $2k+3(q+1)$   & $2k+4(q+1)$    & $2k+5(q+1)$   & $\cdots$& $2k+q(q+1)$ \\
 \hline
\quad\quad\vdots   &  & \quad\vdots & \quad\vdots  & \quad\vdots &\vdots  &\quad\quad\vdots  \\ \hline
 Basis &  & $u_{1}^{q-2}u_{2}u_{0}^{k}$ & $u_{1}^{q-2}u_{2}^{2}u_{0}^{k}$ & $u_{1}^{q-2}u_{2}^{3}u_{0}^{k}$& $\cdots$& $u_{1}^{q-2}u_{2}^{q-2}u_{0}^{k}$ \\ \hline
 Degree&  & $2k+(q^{2}-1)$   & $2k+q(q+1)$    & $2k+(q+1)^{2}$   & $\cdots$& $2k+2q^{2}-2q-4$ \\
 \hline \hline
 Basis & $h_{1}$ & $h_{1}u_{0}$ & $h_{1}u_{0}^{2}$ & $h_{1}u_{0}^{3}$& $\cdots$& $h_{1}u_{0}^{q-1}$ \\ \hline
 Degree & $q^{2}-q$ & $q^{2}-q+2$   & $q^{2}-q+4$    & $q^{2}-q+6$   & $\cdots$& $q^{2}+q-2$  \\
 \hline
 Basis & $h_{2}$ & $h_{2}u_{0}$ & $h_{2}u_{0}^{2}$ & $h_{2}u_{0}^{3}$& $\cdots$& $h_{2}u_{0}^{q-1}$ \\ \hline
 Degree & $q^{2}-q$ & $q^{2}-q+2$   & $q^{2}-q+4$    & $q^{2}-q+6$   & $\cdots$& $q^{2}+q-2$  \\
 \hline
 \quad\quad\vdots   &  & \quad\vdots & \quad\vdots  & \quad\vdots &\vdots  &\quad\quad\vdots  \\ \hline
 Basis & $h_{q-2}$ & $h_{q-2}u_{0}$ & $h_{q-2}u_{0}^{2}$ & $h_{q-2}u_{0}^{3}$& $\cdots$& $h_{q-2}u_{0}^{q-1}$ \\ \hline
 Degree & $q^{2}-q$ & $q^{2}-q+2$   & $q^{2}-q+4$    & $q^{2}-q+6$   & $\cdots$& $q^{2}+q-2$ \\
 \hline
\end{longtable}
}\end{center}
Hence we get the Hilbert series of $R_{2}^{SL_{2}}$,
$$H\left(R_{2}^{SL_{2}},t\right)=
\frac{H_{1}(t)+\sum_{k=1}^{q}H_{2}(k,t)+H_{3}(t)}{(1-t^{q+1})^{2}(1-t^{q^{2}-q})^{2}},$$ where
\begin{eqnarray*}
H_{1}(t)&=&1+2t^{q+1}+3t^{2q+2}+\cdots+qt^{q^{2}-1}+(q-1)t^{q^{2}+q}+\cdots+t^{2q^{2}-2}\\
H_{2}(k,t)&=&t^{2k}+2t^{2k+q+1}+3t^{2k+2q+2}+\cdots+(q-1)t^{2k+q^{2}-q-2}\\
&&+(q-2)t^{2k+q^{2}-1}+\cdots+t^{2k+2q^{2}-2q-4}\\
H_{3}(t)&=&(q-2)t^{q^{2}-q}+(q-2)t^{q^{2}-q+2}+\cdots+(q-2)t^{q^{2}+q-2}.
\end{eqnarray*}

We conclude this section with an application of the Hilbert series. From a well-known theorem due to Stanley (\cite{Sta1978}, Theorem 4.4 or \cite{NS2002}, page 145), we deduce that the invariant ring $R_{2}^{SL_{2}}$ is a Gorenstein algebra if and only if
$H\left(R_{2}^{SL_{2}},\frac{1}{t}\right)=t^{i}H\left(R_{2}^{SL_{2}},t\right),$
for some integer $i$. Thus

\begin{theorem}\label{thm:3.4}
The invariant ring  $R_{2}^{SL_{2}}$ is a Gorenstein algebra.
\end{theorem}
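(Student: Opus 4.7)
The plan is to verify the Stanley criterion cited just above the theorem: $R_{2}^{SL_{2}}$ is Gorenstein if and only if $H(R_{2}^{SL_{2}}, 1/t) = t^{i}\, H(R_{2}^{SL_{2}}, t)$ for some integer $i$. Writing the Hilbert series as $N(t)/D(t)$ with $D(t) = (1-t^{q+1})^{2}(1-t^{q^{2}-q})^{2}$, the routine identity $1 - t^{-d} = -t^{-d}(1 - t^{d})$ (with the four sign contributions cancelling) gives $D(1/t) = t^{-(2q^{2}+2)} D(t)$. Since the top degree among the basis elements in $\mathds{S}$ is $2q^{2}-2$, attained by $(u_{1}^{\ast})^{q-1} u_{1}^{q-1}$, the whole criterion reduces to the palindromicity statement $t^{2q^{2}-2}\,N(1/t) = N(t)$; equivalently, I must show that the multiset $\{\deg f : f \in \mathds{S}\}$ is invariant under the reflection $e \mapsto 2q^{2}-2-e$.

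I would establish this by exhibiting an explicit degree-reversing involution on each of the three parameter families making up $\mathds{S}$. On the first family $\{(u_{1}^{\ast})^{i} u_{1}^{j} : 0\le i,j\le q-1\}$ of degree $(i+j)(q+1)$, take $(i,j) \mapsto (q-1-i,\,q-1-j)$. On the second family $\{(u_{1}^{\ast})^{i} u_{1}^{j} u_{0}^{k} : 0\le i,j\le q-2,\ 1\le k\le q\}$ of degree $(i+j)(q+1) + 2k$, take $(i,j,k) \mapsto (q-2-i,\,q-2-j,\,q+1-k)$. On the third family $\{h_{s} u_{0}^{k} : 1\le s\le q-2,\ 0\le k\le q-1\}$ of degree $q^{2}-q+2k$, take $(s,k) \mapsto (q-1-s,\,q-1-k)$. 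For each, a one-line arithmetic check confirms that the original degree plus the image degree equals $2q^{2}-2$, and that the parameter ranges are preserved (for the second family, the bound $1\le k\le q$ is equivalent to $1\le q+1-k\le q$; the other two bijections are obviously self-maps of their index sets).

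Combining the palindromicity $N(1/t) = t^{-(2q^{2}-2)} N(t)$ with the denominator transformation $D(1/t) = t^{-(2q^{2}+2)} D(t)$ yields the clean identity $H(R_{2}^{SL_{2}}, 1/t) = t^{4}\, H(R_{2}^{SL_{2}}, t)$, and Stanley's theorem then delivers the Gorenstein conclusion. Given the explicit free basis $\mathds{S}$ furnished by Theorem \ref{thm:3.3} and the resulting closed-form expression $N(t) = H_{1}(t) + \sum_{k=1}^{q} H_{2}(k,t) + H_{3}(t)$, the argument is essentially bookkeeping; I do not expect a genuine conceptual obstacle, only the mechanical care needed to verify that the three involutions are well-defined on their respective parameter ranges and to confirm the single closed-form identity for $D(1/t)$.
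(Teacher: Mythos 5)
Your proposal is correct and follows essentially the same route as the paper: both verify Stanley's criterion by showing the numerator $N(t)=H_{1}(t)+\sum_{k=1}^{q}H_{2}(k,t)+H_{3}(t)$ satisfies $t^{2q^{2}-2}N(1/t)=N(t)$ while $D(1/t)=t^{-(2q^{2}+2)}D(t)$, giving the exponent $i=4$. Your explicit degree-reversing involutions on the three index families are just a tidier way of organizing the same palindromicity check the paper performs family by family, and all three involutions and the arithmetic are correct.
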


\begin{proof}
Indeed,
\begin{eqnarray*}
H_{1}\left(\frac{1}{t}\right)&=&\frac{1}{t^{2q^{2}-2}}H_{1}(t)\\
\sum_{k=1}^{q}H_{2}\left(k,\frac{1}{t}\right)&=&\frac{1}{t^{2q^{2}-2}}\sum_{k=1}^{q}H_{2}(k,t)\\
H_{3}\left(\frac{1}{t}\right)&=&\frac{1}{t^{2q^{2}-2}}H_{3}(t).
\end{eqnarray*}
Then
\begin{eqnarray*}
H\left(R_{2}^{SL_{2}},\frac{1}{t}\right)&=&\frac{\frac{1}{t^{2q^{2}-2}}\left(H_{1}(t)+
\sum_{k=1}^{q}H_{2}(k,t)+H_{3}(t)\right)}{\frac{1}{t^{2q^{2}+2}}(1-t^{q+1})^{2}(1-t^{q^{2}-q})^{2}}\\
&=&\frac{t^{2q^{2}+2}}{t^{2q^{2}-2}}H\left(R_{2}^{SL_{2}},t\right)=t^{4}H\left(R_{2}^{SL_{2}},t\right),
\end{eqnarray*}as desired.
\end{proof}

\begin{exam}{\rm
We consider the invariant ring of $SL_{2}(F_{3})$. It follows from Theorem \ref{thm:3.3} that
$R_{2}^{SL_{2}(F_{3})}$ is a free  $R_{2}^{SL_{2}(F_{3})^{2}}$-module on the following set
\begin{equation}
\left\{ \begin{aligned}
u_{-1}^{i}u_{1}^{j},&\quad 0\leq i,j\leq 2;\\
u_{-1}^{i}u_{1}^{j}u_{0}^{k},&\quad 0\leq i,j\leq 1;~~ 1\leq k\leq 3;\\
h_{1}u_{0}^{k},&\quad 0\leq k\leq 2
\end{aligned} \right\}.
\end{equation}Thus
\begin{eqnarray*}
H_{1}(t)&=&1+2t^{4}+3t^{8}+2t^{12}+t^{16}\\
H_{2}(t)&=&\sum_{k=1}^{3}\left(t^{2k}+2t^{2k+4}+t^{2k+8}\right)\\
&=&t^{2}+t^{4}+3t^{6}+2t^{8}+3t^{10}+t^{12}+t^{14}\\
H_{3}(t)&=&t^{6}+t^{8}+t^{10}.
\end{eqnarray*}
The functions $H_{1}(t),H_{2}(t)$ and $H_{3}(t)$ are ``symmetric" in $t$
and $t^{-1}$. So
$H\left(R_{2}^{SL_{2}(F_{3})},t^{-1}\right)=t^{4}H\left(R_{2}^{SL_{2}(F_{3})},t\right)$ and $R_{2}^{SL_{2}(F_{3})}$ is a Gorenstein algebra by Stanley's theorem.\qed
}\end{exam}

\section{A Special Case of Bonnaf\'{e}-Kemper's Conjecture}
\setcounter{equation}{0}
\renewcommand{\theequation}
{4.\arabic{equation}}

\setcounter{theorem}{0}
\renewcommand{\thetheorem}
{4.\arabic{theorem}}

In this section, we shall apply the same techniques to the case of $GL_{2}$.
First we need the notion of  ``relative trace map" of a group relative  to its subgroup, which is  an important tool in calculating
the invariant ring. Let $H\leq G$ be two finite groups  acting linearly on the polynomial algebra $R$ and $\{g_{t}\}$ be a set of right representative for $H$ in $G$. If the index $[G:H]$ is invertible in the ground field, then
the \textit{relative trace map} $\textrm{Tr}_{H}^{G}: R^{H}\rightarrow R^{G}$ is defined by
\begin{equation}
f\mapsto \frac{1}{[G:H]}\sum g_{t}(f)
\end{equation}
for all $f\in R^{H}$. It is easy to verify that $\textrm{Tr}_{H}^{G}$ is an epimorphism of $R^{G}$-modules.

\begin{prop} The set $\mathds{G}=\left\{(d_{2,2}^{\ast})^{\alpha}d_{2,2}^{\beta}\cdot\mathds{S}, 0\leq \alpha,\beta\leq q-2\right\}$ is a free basis for
$R_{2}^{SL_{2}}$ over $R_{2}^{GL_{2}^{2}}$.
\end{prop}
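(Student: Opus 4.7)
The plan is to prove the proposition by transitivity through the intermediate invariant ring $R_{2}^{SL_{2}^{2}}$. Theorem \ref{thm:3.3} already identifies $\mathds{S}$ as a free $R_{2}^{SL_{2}^{2}}$-basis of $R_{2}^{SL_{2}}$, so it suffices to show that $B := \{(d_{2,2}^{\ast})^{\alpha}(d_{2,2})^{\beta} : 0\leq \alpha,\beta\leq q-2\}$ is a free $R_{2}^{GL_{2}^{2}}$-basis of $R_{2}^{SL_{2}^{2}}$; then $\mathds{G} = B\cdot\mathds{S}$ is automatically a free $R_{2}^{GL_{2}^{2}}$-basis of $R_{2}^{SL_{2}}$.

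The key step is to identify $R_{2}^{GL_{2}^{2}}$ as a polynomial algebra. Because the two factors of $GL_{2}^{2}$ act independently on the $x$-variables and on the $y$-variables, the invariant ring factors as $R_{2}^{GL_{2}^{2}} = F_{q}[x_{1},x_{2}]^{GL_{2}} \otimes_{F_{q}} F_{q}[y_{1},y_{2}]^{GL_{2}}$, so by classical Dickson theory each tensor factor is polynomial in two Dickson invariants. A direct computation using (\ref{eq:2.1}) shows $A\cdot d_{2,2} = (\det A)^{-1}d_{2,2}$; since $\det(A)^{q-1}=1$ for every $A\in GL_{2}$, the element $(d_{2,2})^{q-1}$ of degree $q^{2}-1$ is $GL_{2}$-invariant and, for degree reasons, is the top Dickson generator. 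The same applies to $(d_{2,2}^{\ast})^{q-1}$; combined with $c_{2,1}$ and $c_{2,1}^{\ast}$, which are already $GL_{2}$-invariant of degree $q^{2}-q$, one obtains $R_{2}^{GL_{2}^{2}} = F_{q}[(d_{2,2})^{q-1}, c_{2,1}, (d_{2,2}^{\ast})^{q-1}, c_{2,1}^{\ast}]$.

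Granted this identification, the freeness of $B$ is immediate: by Kemper's theorem $R_{2}^{SL_{2}^{2}} = F_{q}[d_{2,2}, c_{2,1}, d_{2,2}^{\ast}, c_{2,1}^{\ast}]$, and the extension $R_{2}^{GL_{2}^{2}} \hookrightarrow R_{2}^{SL_{2}^{2}}$ is obtained merely by adjoining $d_{2,2}$ and $d_{2,2}^{\ast}$, whose $(q-1)$-th powers are already free polynomial generators of $R_{2}^{GL_{2}^{2}}$. Hence $B$ is a free basis of rank $(q-1)^{2} = [GL_{2}^{2}:SL_{2}^{2}]$, consistent with Lemma \ref{lem:2.1}. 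Transitivity then yields $\mathds{G} = B\cdot\mathds{S}$ as a free $R_{2}^{GL_{2}^{2}}$-basis of $R_{2}^{SL_{2}}$, of rank $(q-1)^{2}\cdot q(q^{2}-1) = q(q-1)^{3}(q+1) = [GL_{2}^{2}:SL_{2}]$, matching the prediction of Lemma \ref{lem:2.1} applied to the pair $SL_{2} \leq GL_{2}^{2}$ (using that $R_{2}^{SL_{2}}$ is Cohen-Macaulay by Proposition \ref{prop:2.3}).

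The only substantive step is the Dickson-theoretic identification of $R_{2}^{GL_{2}^{2}}$; the remainder is purely formal via transitivity, so no serious obstacle is anticipated.
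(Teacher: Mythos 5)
Your proof is correct and follows essentially the same route as the paper: both pass through the intermediate ring $R_{2}^{SL_{2}^{2}}$ and the basis $\{(d_{2,2}^{\ast})^{\alpha}d_{2,2}^{\beta}\}$, the only differences being that you supply the Dickson-theoretic details behind the paper's unproved claim that this set is a free $R_{2}^{GL_{2}^{2}}$-basis of $R_{2}^{SL_{2}^{2}}$, and that you conclude by transitivity of free bases where the paper instead combines spanning with the rank count $[GL_{2}^{2}:SL_{2}]=q(q^{2}-1)(q-1)^{2}$ from Lemma \ref{lem:2.1}. These are cosmetic variations; no gap.
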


\begin{proof}
It is easy to prove that $R_{2}^{GL_{2}^{2}}$ is also a polynomial algebra over $F_{q}$ and $\left\{(d_{2,2}^{\ast})^{\alpha}d_{2,2}^{\beta}, 0\leq \alpha,\beta\leq q-2\right\}$ is a free basis for $R_{2}^{SL_{2}^{2}}$ over
$R_{2}^{GL_{2}^{2}}$. By Theorem \ref{thm:3.3}, $R_{2}^{SL_{2}}$ is just the $R_{2}^{GL_{2}^{2}}$-module spanned by $\mathds{G}$.
By Lemmas \ref{lem:2.1} and  \ref{prop:2.2}, we know that
the rank of $R_{2}^{SL_{2}}$ as a free $R_{2}^{GL_{2}^{2}}$-module is $q(q^{2}-1)(q-1)^{2}$. This is just the size of $\mathds{G}$.
\end{proof}

\begin{theorem} \label{thm:4.2}
The invariant ring
$R_{2}^{GL_{2}}$ is a free  $R_{2}^{GL_{2}^{2}}$-module on the following set
\begin{equation}
\mathds{D}=\left\{ \begin{aligned}
(u_{1}^{\ast})^{i}u_{1}^{j}(d_{2,2}^{\ast}d_{2,2})^{\alpha},&\quad 0\leq i,j\leq q-1,~~0\leq \alpha\leq q-2,\\
(u_{1}^{\ast})^{i}u_{1}^{j}u_{0}^{k}(d_{2,2}^{\ast}d_{2,2})^{\alpha},&\quad 0\leq i,j\leq q-2,~~ 1\leq k\leq q,~~0\leq \alpha\leq q-2,\\
h_{s}u_{0}^{k}(d_{2,2}^{\ast})^{\beta+s}(d_{2,2})^{\beta},&\quad 1\leq s\leq q-2,~~ 0\leq k\leq q-1,~~0\leq \beta\leq q-2
\end{aligned} \right\}.
\end{equation}
\end{theorem}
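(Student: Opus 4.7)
The plan is to parallel the proof of Theorem 3.3 for the pair $(GL_2, GL_2^2)$ in place of $(SL_2, SL_2^2)$, building on the basis $\mathds{G}$ of $R_2^{SL_2}$ over $R_2^{GL_2^2}$ from the preceding proposition. The first step is to establish the rank: since $P_2$ is the Sylow $p$-subgroup of $GL_2$ and $R_2^{P_2}$ is Cohen-Macaulay by Bonnaf\'e-Kemper, Lemma \ref{prop:2.2} gives that $R_2^{GL_2}$ is Cohen-Macaulay. Lemma \ref{lem:2.1} applied to $GL_2 \leq GL_2^2$ (using that $R_2^{GL_2^2}$ is a polynomial algebra, as noted in the proof of the previous proposition) then yields that $R_2^{GL_2}$ is a free $R_2^{GL_2^2}$-module of rank $[GL_2^2 : GL_2] = |GL_2| = q(q-1)^2(q+1)$. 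A direct sum confirms $|\mathds{D}| = q^2(q-1) + q(q-1)^3 + q(q-1)(q-2) = q(q-1)(q^2-1) = q(q-1)^2(q+1)$, matching the rank.

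Next, I would verify $\mathds{D} \subset R_2^{GL_2}$ via character analysis. The quotient $GL_2/SL_2 \cong F_q^*$ acts on $R_2^{SL_2}$ via any lift to $GL_2$ (for instance $\operatorname{diag}(\lambda,1)$), and $R_2^{GL_2}$ is its trivial-character eigenspace. Using (\ref{eq:2.1})-(\ref{eq:2.2}) and the identity $\lambda^q = \lambda$ in $F_q^*$, one checks that $u_0, u_1, u_1^\ast$ are $F_q^*$-invariant, while $d_{2,2}$ and $d_{2,2}^\ast$ carry mutually inverse characters $\lambda^{-1}$ and $\lambda$. A direct computation on (\ref{eq:3.7}) shows $h_s$ carries the character $\lambda^{-s}$, so $h_s(d_{2,2}^\ast)^{\beta+s}(d_{2,2})^\beta$ is $F_q^*$-invariant; combined with the obvious invariance of Types 1 and 2, every element of $\mathds{D}$ lies in $R_2^{GL_2}$.

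Finally, to establish the basis property, I would exploit the basis $\mathds{G}$ and the Reynolds operator. Since $|F_q^*| = q-1$ is coprime to $p$, the averaging map $\rho = \tfrac{1}{q-1}\sum_{\sigma \in F_q^*}\sigma$ is an $R_2^{GL_2^2}$-module projection $R_2^{SL_2} \twoheadrightarrow R_2^{GL_2}$. Since each $\mathds{G}$-element is an $F_q^*$-eigenvector, $\rho$ annihilates the nontrivial eigenvectors, so the character-zero elements of $\mathds{G}$ form a free $R_2^{GL_2^2}$-basis of $R_2^{GL_2}$. The Type 1 and Type 2 elements of $\mathds{D}$ coincide with the character-zero Type 1 and Type 2 elements of $\mathds{G}$. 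For Type 3, one uses the identity obtained by multiplying (\ref{eq:3.7}) by $d_{2,2}^{q-1-s}$, namely
\[
u_0^q\, h_s d_{2,2}^{q-1-s} = u_1^{s+1}(d_{2,2}^\ast d_{2,2})^{q-s-1} + (u_1^\ast)^{q-s}(d_{2,2})^{q-1},
\]
together with the $\ast$-dual identity and iterated applications of Lemma \ref{lem:3.2}, to bridge between the two natural parameterizations of the character-zero Type 3 elements. Linear independence of $\mathds{D}$ over $R_2^{GL_2^2}$ follows by expanding in the $\mathds{G}$-basis and invoking the domain property of $R_2$; combined with the matching rank, this forces $\mathds{D}$ to be a free basis. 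The main obstacle is precisely this Type 3 bridging step: one must verify that the change-of-basis matrix between the $\mathds{D}$-Type 3 elements (parameterized by $0 \leq \beta \leq q-2$) and the character-zero $\mathds{G}$-Type 3 elements (parameterized by $\alpha - \beta \equiv s \pmod{q-1}$) is unimodular over $R_2^{GL_2^2}$.
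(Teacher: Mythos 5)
Your overall strategy is the paper's own: decompose $R_{2}^{SL_{2}}$ as the free $R_{2}^{GL_{2}^{2}}$-module on $\mathds{G}$ and extract $R_{2}^{GL_{2}}$ as the image of the relative trace, i.e.\ the trivial $F_{q}^{\ast}$-isotypic component. Your eigenspace formulation is in fact cleaner than the paper's explicit trace computation: since each element of $\mathds{G}$ is an $F_{q}^{\ast}$-eigenvector and the $F_{q}^{\ast}$-action is $R_{2}^{GL_{2}^{2}}$-linear, $R_{2}^{GL_{2}}$ is exactly the free submodule spanned by the character-zero elements of $\mathds{G}$, and the rank count is not even needed for that conclusion. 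Up to that point your argument is complete, and your character computations and the count $|\mathds{D}|=q(q-1)^{2}(q+1)$ are correct.

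The gap is exactly where you flagged it, and it cannot be closed in the form you propose. For fixed $s,k$ the character-zero Type 3 elements of $\mathds{G}$ are $h_{s}u_{0}^{k}(d_{2,2}^{\ast})^{\alpha}d_{2,2}^{\beta}$ with $\alpha-\beta\equiv s \pmod{q-1}$ and $0\leq\alpha,\beta\leq q-2$; when $\beta\geq q-1-s$ this forces $\alpha=\beta+s-(q-1)$, whereas the corresponding element of $\mathds{D}$ is $h_{s}u_{0}^{k}(d_{2,2}^{\ast})^{\beta+s}d_{2,2}^{\beta}=(d_{2,2}^{\ast})^{q-1}\cdot h_{s}u_{0}^{k}(d_{2,2}^{\ast})^{\beta+s-(q-1)}d_{2,2}^{\beta}$. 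The change-of-basis matrix is therefore diagonal with entries $1$ and $(d_{2,2}^{\ast})^{q-1}$; the latter is a non-unit of positive degree in $R_{2}^{GL_{2}^{2}}$, so the matrix is not unimodular and the $R_{2}^{GL_{2}^{2}}$-span of $\mathds{D}$ is a \emph{proper} submodule of $R_{2}^{GL_{2}}$. Concretely, for $q=3$ the invariant $h_{1}d_{2,2}$ lies in $R_{2}^{GL_{2}}$ (its $F_{q}^{\ast}$-character is $z^{-2}=1$) and is itself a free-basis element of $R_{2}^{SL_{2}}$ over $R_{2}^{GL_{2}^{2}}$, but the only element of $\mathds{D}$ whose expansion involves it is $h_{1}(d_{2,2}^{\ast})^{2}d_{2,2}=(d_{2,2}^{\ast})^{2}\cdot h_{1}d_{2,2}$, so $h_{1}d_{2,2}$ is not in the span of $\mathds{D}$. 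The statement is salvaged only if the exponent $\beta+s$ is read modulo $q-1$, i.e.\ the Type 3 elements with $\beta+s\geq q-1$ are replaced by $h_{s}u_{0}^{k}(d_{2,2}^{\ast})^{\beta+s-(q-1)}d_{2,2}^{\beta}$. You should note that the paper's own proof has the same lacuna: it evaluates $\phi\left(h_{s}u_{0}^{k}(d_{2,2}^{\ast})^{\alpha}d_{2,2}^{\beta}\right)$ using the dichotomy $\alpha=\beta+s$ versus $\alpha\neq\beta+s$, where the correct condition coming from $\sum_{z\in F_{q}^{\ast}}z^{a}$ is the congruence $\alpha\equiv\beta+s\pmod{q-1}$, and so it silently discards precisely the problematic trace images.
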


\begin{proof}
We choose $\left\{g_{z}SL_{2}, z\in F_{q}^{\ast}\right\}$ as a set of right coset representatives for $SL_{2}$ in $GL_{2}$,
where $g_{z}=\left(\begin{smallmatrix}
z&0\\
0&1
\end{smallmatrix}\right)$. We write $\phi:=\textrm{Tr}_{SL_{2}}^{GL_{2}}$  for
the relative trace map  from $ R_{2}^{SL_{2}}$ to $R_{2}^{GL_{2}}$, then
\begin{equation}
\phi(f)=\frac{1}{[GL_{2}:SL_{2}]}\sum_{z\in F_{q}^{\ast}} g_{z}(f)=
-\sum_{z\in F_{q}^{\ast}} g_{z}(f)
\end{equation}
is an epimorphism of $R_{2}^{GL_{2}}$-modules. Since the set $\mathds{D}$ has exactly $(q^{2}-1)(q^{2}-q)$ elements, it suffices to prove that
the image of $\mathds{G}$ lies in the $R_{2}^{GL_{2}}$-module spanned by $\mathds{D}$.
We observe that $\phi$ fixes  $u_{1}^{\ast},u_{0},u_{1}$ and
$g_{z}(d_{2,2}^{\ast})=zd_{2,2}^{\ast},~g_{z}(d_{2,2})=\frac{1}{z}d_{2,2}, ~g_{z}(h_{s})=\frac{1}{z^{s}}h_{s}.$
Assume that $a$ is a positive integer and recall the following well-known identity (see (\cite{CHSW1996}, Lemma 9.4) for a proof)
$$\sum_{z\in F_{q}^{\ast}}z^{a}=\left\{\begin{array}{cc}
0,&a \not\equiv 0 ~~(\textrm{mod } q-1),\\
-1,&a \equiv 0 ~~(\textrm{mod } q-1).
\end{array}\right.$$
Hence
\begin{eqnarray*}
\phi\left((u_{1}^{\ast})^{i}u_{1}^{j}(d_{2,2}^{\ast})^{\alpha}d_{2,2}^{\beta}\right)&=&
-\sum_{z\in F_{q}^{\ast}}z^{\alpha-\beta}(u_{1}^{\ast})^{i}u_{1}^{j}(d_{2,2}^{\ast})^{\alpha}d_{2,2}^{\beta}\\
&=&-(u_{1}^{\ast})^{i}u_{1}^{j}(d_{2,2}^{\ast})^{\alpha}d_{2,2}^{\beta}\left(\sum_{z\in F_{q}^{\ast}}z^{\alpha-\beta}\right)\\
&=&\left\{\begin{array}{cc}
0,&\alpha-\beta \not\equiv 0 ~~(\textrm{mod } q-1),\\
(u_{1}^{\ast})^{i}u_{1}^{j}(d_{2,2}^{\ast})^{\alpha}d_{2,2}^{\beta},&\alpha-\beta \equiv 0 ~~(\textrm{mod } q-1).
\end{array}\right.\\
&=&\left\{\begin{array}{cc}
0,&\alpha\neq\beta,\\
(u_{1}^{\ast})^{i}u_{1}^{j}(d_{2,2}^{\ast})^{\alpha}d_{2,2}^{\alpha},&\alpha=\beta.
\end{array}\right.
\end{eqnarray*}
Similarly, we have
\begin{eqnarray*}
\phi\left((u_{1}^{\ast})^{i}u_{1}^{j}u_{0}^{k}(d_{2,2}^{\ast})^{\alpha}d_{2,2}^{\beta}\right)
&=&\left\{\begin{array}{cc}
0,&\alpha\neq\beta,\\
(u_{1}^{\ast})^{i}u_{1}^{j}u_{0}^{k}(d_{2,2}^{\ast})^{\alpha}d_{2,2}^{\alpha},&\alpha=\beta.
\end{array}\right.\\
\phi\left(h_{s}u_{0}^{k}(d_{2,2}^{\ast})^{\alpha}d_{2,2}^{\beta}\right)
&=&\left\{\begin{array}{cc}
0,&\alpha\neq\beta+s,\\
h_{s}u_{0}^{k}(d_{2,2}^{\ast})^{\beta+s}d_{2,2}^{\beta},&\alpha=\beta+s.
\end{array}\right.
\end{eqnarray*}
This  completes the proof.
\end{proof}

\begin{theorem}\label{thm:4.3}
The invariant ring $R_{2}^{GL_{2}}$ is a Gorenstein algebra generated by
$$\left\{d_{2,2}^{q-1},c_{2,1},(d_{2,2}^{\ast})^{q-1},c_{2,1}^{\ast},u_{1}^{\ast},u_{0},u_{1}\right\}.$$
\end{theorem}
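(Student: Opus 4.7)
The plan is to imitate the strategy behind Theorem \ref{thm:3.4}: first compute $H(R_2^{GL_2},t)$ using the $R_2^{GL_2^2}$-basis $\mathds{D}$ of Theorem \ref{thm:4.2}, then apply Stanley's criterion for Gorensteinness, and finally extract the generating set by rewriting each basis element in terms of the seven listed invariants.

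For the Hilbert series step, I would start from the observation (Dickson's theorem) that $R_2^{GL_2^2}=F_q[d_{2,2}^{q-1},c_{2,1},(d_{2,2}^{\ast})^{q-1},c_{2,1}^{\ast}]$ is a polynomial algebra with generator degrees $(q^2-1,\,q^2-q,\,q^2-1,\,q^2-q)$, so that
$$H(R_2^{GL_2},t)=\frac{N(t)}{(1-t^{q^2-1})^{2}(1-t^{q^2-q})^{2}},\qquad N(t)=\sum_{m\in\mathds{D}}t^{\deg m}.$$
Using $|u_0|=2$, $|u_1|=|u_1^{\ast}|=|d_{2,2}|=|d_{2,2}^{\ast}|=q+1$, and $|h_s|=q^2-q$, I would split $N(t)=N_1(t)+N_2(t)+N_3(t)$ according to the three blocks of $\mathds{D}$. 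Each block factors as a product of finite sums whose exponent ranges are symmetric intervals about their midpoints (the new factor $(d_{2,2}^{\ast}d_{2,2})^{\alpha}$ for $0\le\alpha\le q-2$ contributes a palindromic sum $\sum t^{2\alpha(q+1)}$, precisely as $H_1,H_2,H_3$ in Section 3 were manifestly symmetric), so each $N_j$ is palindromic. Combined with the factor $t^{-(4q^2-2q-2)}$ produced by the denominator under $t\mapsto 1/t$, this furnishes $H(R_2^{GL_2},1/t)=t^{i}H(R_2^{GL_2},t)$ for an explicit integer $i$, and Stanley's theorem delivers Gorensteinness. The main obstacle here is the bookkeeping: organizing $N(t)$ into manifestly palindromic pieces without a lengthy term-by-term enumeration.

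For the generating set, I would use that $R_2^{GL_2^2}$ is polynomial in $\{d_{2,2}^{q-1},c_{2,1},(d_{2,2}^{\ast})^{q-1},c_{2,1}^{\ast}\}$ and that $R_2^{GL_2}=R_2^{GL_2^2}\cdot\mathds{D}$ by Theorem \ref{thm:4.2}, so it suffices to show every $m\in\mathds{D}$ is a polynomial in the seven proposed generators. For the first two families of $\mathds{D}$, the identity $d_{2,2}^{\ast}d_{2,2}=u_1u_1^{\ast}-u_0^{q+1}$ from (\ref{eq:3.2}) converts $(d_{2,2}^{\ast}d_{2,2})^{\alpha}$ into a polynomial in $\{u_0,u_1,u_1^{\ast}\}$. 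For the third family, the crux is to show that $h_s(d_{2,2}^{\ast})^{s}$ is already a polynomial in $\{u_0,u_1,u_1^{\ast},c_{2,1}^{\ast}\}$: multiplying (\ref{eq:3.7}) by $(d_{2,2}^{\ast})^{s}$ gives $h_s u_0^{q}(d_{2,2}^{\ast})^{s}=u_1^{s}\cdot u_1(d_{2,2}^{\ast})^{q-1}+(u_1^{\ast})^{q-s}(d_{2,2}d_{2,2}^{\ast})^{s}$; substituting (\ref{eq:3.4}) to replace $u_1(d_{2,2}^{\ast})^{q-1}$ by $c_{2,1}^{\ast}u_0^{q}-(u_1^{\ast})^{q}$ and expanding $(d_{2,2}^{\ast}d_{2,2})^{s}$ via (\ref{eq:3.2}), the leading $u_1^{s}(u_1^{\ast})^{q}$ terms cancel, every remaining summand on the right carries a $u_0^{q}$ factor, and cancellation of $u_0^{q}$ yields the desired polynomial expression. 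The residual factor $d_{2,2}^{\beta}(d_{2,2}^{\ast})^{\beta}=(u_1u_1^{\ast}-u_0^{q+1})^{\beta}$ is handled by the same identity, so every type-3 basis element $h_s u_0^{k}(d_{2,2}^{\ast})^{\beta+s}d_{2,2}^{\beta}$ lies in the subring generated by the seven elements, completing the reduction.
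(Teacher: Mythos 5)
Your generator argument coincides with the paper's: you re-derive identity (\ref{eq:3.8}) to show that $h_{s}(d_{2,2}^{\ast})^{s}$ lies in the subalgebra generated by $u_{0},u_{1},u_{1}^{\ast},c_{2,1}^{\ast}$, and you dispose of $(d_{2,2}d_{2,2}^{\ast})^{\beta}$ via (\ref{eq:3.2}); that half is correct. The gap is in the Hilbert-series half. Writing $N=N_{1}+N_{2}+N_{3}$ for the three blocks of $\mathds{D}$, it is not enough that each $N_{j}$ be palindromic: Stanley's criterion requires $N(1/t)=t^{-d}N(t)$ for a \emph{single} $d$, i.e.\ all three blocks must be palindromic about the same center, and you never check this. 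Blocks 1 and 2 both satisfy $N_{j}(1/t)=t^{-(4q^{2}-2q-6)}N_{j}(t)$, but block 3 as written in Theorem \ref{thm:4.2} gives $N_{3}(t)=t^{q^{2}-q}\bigl(\sum_{k=0}^{q-1}t^{2k}\bigr)\bigl(\sum_{s=1}^{q-2}t^{s(q+1)}\bigr)\bigl(\sum_{\beta=0}^{q-2}t^{2\beta(q+1)}\bigr)$, whose reflection degree is $5q^{2}-2q-7$, off by $q^{2}-1$. So the functional equation you assert fails for the stated $\mathds{D}$; already for $q=3$ one gets $N_{3}=t^{10}+t^{12}+t^{14}+t^{18}+t^{20}+t^{22}$, centered at $16$ while $N_{1},N_{2}$ are centered at $12$.

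The discrepancy is not entirely your fault: it exposes a defect in the third family of $\mathds{D}$ itself. When $\beta+s\geq q-1$ the element $h_{s}u_{0}^{k}(d_{2,2}^{\ast})^{\beta+s}d_{2,2}^{\beta}$ contains the factor $(d_{2,2}^{\ast})^{q-1}\in R_{2}^{GL_{2}^{2}}$ and hence lies in the ideal of $R_{2}^{GL_{2}}$ generated by the homogeneous system of parameters, so it cannot belong to a free basis; correspondingly, the trace computation in Theorem \ref{thm:4.2} overlooks the solution $\alpha=\beta+s-(q-1)$ of $\alpha-\beta-s\equiv 0 \pmod{q-1}$. Replacing the exponent $\beta+s$ by its residue modulo $q-1$ restores freeness and makes $N_{3}$ palindromic about the common center $2q^{2}-q-3$ (for each fixed $s$ the multiset $\{((\beta+s)\bmod (q-1))+\beta\}$ is symmetric about $q-2$), after which your computation, combined with your generator argument, does close the proof. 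As written, however, your proof of $H(R_{2}^{GL_{2}},1/t)=t^{i}H(R_{2}^{GL_{2}},t)$ has a genuine hole precisely at the ``same $i$ for every block'' step, and filling it forces a correction to the basis you are quoting.
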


\begin{proof}
The similar arguments as in Theorem \ref{thm:3.4} will imply that
$
H\left(R_{2}^{GL_{2}},\frac{1}{t}\right)
=t^{4}H\left(R_{2}^{GL_{2}},t\right).
$
Stanley's theorem yields that $R_{2}^{GL_{2}}$ is a Gorenstein algebra.
Now we discuss the generators of $R_{2}^{GL_{2}}$.
We employ $R_{2}^{\ast}$ to denote the $F_{q}$-algebra generated by $\left\{d_{2,2}^{q-1},c_{2,1},(d_{2,2}^{\ast})^{q-1},c_{2,1}^{\ast},u_{1}^{\ast},u_{0},u_{1}\right\}$, and we claim that $R_{2}^{GL_{2}}=R_{2}^{\ast}$.
It follows from (\ref{eq:3.2}) that $d_{2,2}^{\ast}d_{2,2}=u_{1}^{\ast}u_{1}-u_{0}^{q+1}\in R_{2}^{\ast}$.
Since $R_{2}^{GL_{2}^{2}}=F_{q}\left[d_{2,2}^{q-1},c_{2,1},(d_{2,2}^{\ast})^{q-1},c_{2,1}^{\ast}\right]$, and by Theorem \ref{thm:4.2},  it suffices to prove that
each $h_{s}u_{0}^{k}(d_{2,2}^{\ast})^{\alpha}d_{2,2}^{\beta}$ in $\mathds{D}$ is contained in $R_{2}^{\ast}$.
By (\ref{eq:3.8}) we get
$$
h_{s}(d_{2,2}^{\ast})^{s}=\frac{(c_{2,1}^{\ast})u_{0}^{q}u_{1}^{s}+(u_{1}^{\ast})^{q-s}\left(\sum_{k=0}^{s-1}(-1)^{s-k}{s\choose k}(u_{1}^{\ast}u_{1})^{k}\left(u_{0}^{q+1}\right)^{s-k}\right)}{u_{0}^{q}}\in R_{2}^{\ast}.
$$
Thus
$
h_{s}u_{0}^{k}(d_{2,2}^{\ast})^{\beta+s}d_{2,2}^{\beta}=(h_{s}(d_{2,2}^{\ast})^{s})u_{0}^{k}(d_{2,2}^{\ast}d_{2,2})^{\beta}\in R_{2}^{\ast},
$ as desired.
\end{proof}

We close this paper with two remarks.

\begin{rem}{\rm
When the manuscript was finished, we sent a copy to David Wehlau. He tell us that there maybe exist another way to  compute $R_{2}^{GL_{2}}$. Here we record his idea:
First, one should compute the generators and their relations for $R_{2}^{SL_{2}}$
(see (\ref{eq:3.2})-(\ref{eq:3.4}) and (\ref{eq:3.9})-(\ref{eq:3.11}) for the required relations),  and notice that the factor group $GL_{2}/SL_{2}$ is isomorphic to $F_{q}^{\ast}$. Thus one can
compute $R_{2}^{GL_{2}}$ via
$R_{2}^{GL_{2}}=\left(R_{2}^{SL_{2}}\right)^{F_{q}^{\ast}}.$ However we do not know which method is faster to compute this invariant ring.\qed
}
\end{rem}

\begin{rem}{\rm
In \cite{BK2011}, Bonnaf\'{e} and  Kemper showed that for $G\in \left\{P_{2}(F_{q}),U_{2}(F_{q})\right\}$,
the invariant ring  $F_{q}[V\oplus V^{\ast}]^{G}$ is generated by generators of $F_{q}[V]^{G}$,
invariants of the form $u_{i}$, and  their $\ast$-images (actually, they proved that this result holds for all $n$ = dim$V\geq2$).
Theorem \ref{thm:4.3} means that a similar result is also valid for $GL_{2}(F_{q})$.
Furthermore, they found that this statement is not valid for $SL_{2}(F_{3})$, i.e.,
$F_{3}[V\oplus V^{\ast}]^{SL_{2}(F_{3})}$ can not be generated by generators of $F_{3}[V]^{SL_{2}(F_{3})}$,
invariants of the form $u_{i}$, and their $\ast$-images. Here we can explain the reason.  Indeed, by Theorem \ref{thm:3.3} (or see Theorem 8.1 in \cite{CSW2010}), we see that
$$F_{3}[V\oplus V^{\ast}]^{SL_{2}(F_{3})}=F_{q}\left[d_{2,2},c_{2,1},d_{2,2}^{\ast},c_{2,1}^{\ast},u_{1}^{\ast},u_{0},u_{1},h_{1}\right],$$ and
 $h_{1}$ is not contained in
the subalgebra $F_{q}\left[d_{2,2},c_{2,1},d_{2,2}^{\ast},c_{2,1}^{\ast},u_{1}^{\ast},u_{0},u_{1}\right]$.\qed
}
\end{rem}

\section*{\textit{Acknowledgments}}
The author would like to thank Professors Hang Gao, Manfred Hartl,  and  David Wehlau for their help.

\end{document}